\documentclass[a4paper,11pt]{article}
\usepackage{fullpage}
\usepackage{mathtools}

\usepackage{amssymb, amsthm, amsmath}
\PassOptionsToPackage{hyphens}{url}\usepackage[backref,colorlinks,citecolor=blue,bookmarks=true]{hyperref}

\newtheorem{main}{Theorem}
\newtheorem*{theorem*}{Theorem}
\newtheorem{theorem}{Theorem}[section]
\newtheorem{lemma}[theorem]{Lemma}
\newtheorem{proposition}[theorem]{Proposition}

\newtheorem{observation}[theorem]{Observation}
\newtheorem{fact}[theorem]{Fact}
\newtheorem{corollary}[theorem]{Corollary}

\theoremstyle{definition}
\newtheorem{definition}[theorem]{Definition}
\newtheorem{example}[theorem]{Example}
\newtheorem{remark}[theorem]{Remark}

\newtheorem*{manualthm}{Theorem}

\newtheorem*{manuallemma}{Lemma}

\newcommand{\F}{\mathbb{F}}

\newcommand{\CC}{\mathbb{C}}
\newcommand{\sub}{\subseteq}

\newcommand{\setmid}{\,\vert\,}

\renewcommand{\a}{\alpha}
\renewcommand{\b}{\beta}
\newcommand{\g}{\gamma}

\renewcommand{\d}{\delta}
\newcommand{\D}{\Delta}

\DeclareMathOperator{\Image}{Im}
\DeclareMathOperator*{\Exp}{\mathbb{E}}

\newcommand{\x}{\vec{x}}

\DeclareMathOperator{\PR}{PR}

\DeclareMathOperator{\AR}{AR}

\DeclareMathOperator{\rk}{rk}
\DeclareMathOperator{\bias}{bias}

\DeclareMathOperator{\ch}{ch}
\DeclareMathOperator{\Poly}{Poly}
\DeclareMathOperator{\Forms}{Form}

\DeclareMathOperator{\codim}{codim}
\DeclareMathOperator{\cor}{cor}
\DeclareMathOperator{\rank}{rank}

\newcommand{\polar}[1]{\overline{#1}}

\renewcommand{\P}{\mathbb{P}}

\DeclareMathOperator{\Z}{Z}

\newcommand{\Nabla}{\boldsymbol{\nabla}}

\newcommand{\vvert}{\,\vert\,}

\DeclareMathOperator{\GI}{GI}

\newcommand{\inv}[1]{#1^{\raisebox{0.2ex}{$\scriptscriptstyle\!-\!1$}}}

\def\dj{d\kern-0.4em\char"16\kern-0.1em}

\renewcommand{\vec}[1]{\mathbf{#1}}

\renewcommand{\v}{\vec{v}}
\renewcommand{\x}{\vec{x}}

\newcommand{\norm}[1]{\lVert{#1}\rVert}

\newcommand{\Ot}{\tilde{O}}

\newcommand{\1}{\mathbf{1}}

\DeclareMathOperator{\bi}{\mathfrak{bias}} 

\title{Nearly-polynomial inverse theorem for the $U^d$ norm in degree $d+1$}

\author{
	Tomer Milo\thanks{
        School of Mathematical Sciences, Tel Aviv University,
        Ramat Aviv, 
        Tel Aviv 69978, Israel.
        Email: \texttt{tomermilo@mail.tau.ac.il}.
        Work on this project began as part of a summer graduate research assistant position, supported by NSF Award DMS-2302988.}
    \and
    Guy Moshkovitz\thanks{
        Department of Mathematics, City University of New York (Baruch College \& Graduate Center), 
        New York,
        NY 10010, USA. 
        Email: \texttt{guymoshkov@gmail.com}.
        Supported by NSF Award DMS-2302988 \& PSC-CUNY award.}}
\date{}

\begin{document}
\maketitle



\begin{abstract}
    We prove a nearly polynomial inverse theorem for the Gowers $U^d$ norm, over finite fields of non-small characteristic, for polynomials of degree $d+1$. The case of degree $d$ was very recently settled 
    by Mili\'{c}evi\'{c} and Ran\dj elovi\'{c}
    with a fully polynomial bound. We moreover provide a nearly polynomial inverse theorem for \emph{homogeneous} polynomials of any degree smaller than $2d$.

    Our methods may be of independent interest, and include a refined notion of polynomial decomposition that captures correlation with  polynomials of lower degree than classical notions do, and a new correlation lemma that improves upon similar lemmas in the literature.
    Additionally, we illustrate the usefulness of the new correlation lemma by using it to give an alternative proof for the aforementioned result of  Mili\'{c}evi\'{c} and Ran\dj elovi\'{c}.
\end{abstract}

\section{Introduction}

Let $\F$ be a finite field, for simplicity prime $\F=\F_p$.
The \emph{discrete derivative} of a function $f \colon \F^n \to \F$ in direction $v \in \F^n$ at the point $x \in \F^n$ is $\D_v f(x) = f(x+v)-f(x)$.
More generally, the (discrete) derivative of order $d$,
in directions $\vec{v}=(v^{(1)},\ldots,v^{(d)}) \in (\F^n)^d$ at $x \in \F^n$, is
\[\D^d_{\v} f(x) := \D_{v^{(d)}}\cdots\D_{v^{(1)}} f(x).\]
We write $\D^d f$ for the map $(x,\v) \mapsto \D^d_{\v}f(x)$.
A function $f \colon \F^n \to \F$ can be written\footnote{Or: the unique reduced polynomial (variable degrees below $|\F|$) agreeing with $f$ has (total) degree below $d$.}
as a polynomial of degree below $d$ if and only if $\D^d f = 0$ (i.e., $\D^d_\v f(x) = 0$ for every $(x,\v) \in (\F^n)^{d+1}$).



An important (qualitative) result about polynomials over $\F$, of characteristic $\ch(\F)>d$, is that the derivative characterization is robust: 
$f$ is \emph{correlated} with some polynomial of degree below $d$
if and only if $\D^d f$ is \emph{biased}.
This lies at the core of the theory of Gowers norms~\cite{Gow01,Tao12,HHL19}, 
and is usually formulated using exponential sums:\footnote{Which are closely related to the distance of a distribution from uniformity; see e.g.\ Claim~33 in~\cite{BV10}.} 
for any function $f \colon \F^m \to \F$, 
one denotes 
\[\bias(f) = |\Exp_{x \in \F^m} e(f(x))| \quad\text{and}\quad \cor(f,P) = \bias(f-P)\]
where $e(y):=e^{2\pi i y/p} \colon \F\to\mathbb{C}$ and $\Exp_{x \in A} := \frac{1}{|A|}\sum_{x \in A}$.
We note that $\bias(f) \in [0,1]$,\footnote{For example, if $\deg(f)<d$ then $\bias(\D^d f)=1$, while if $f$ is random then $\bias(\D^d f)=o(1)$ as $n\to\infty$.}
and that, explicitly, $\bias(\D^d f)=\Exp_{x \in \F^n,\vec{v} \in (\F^n)^d} e(\D^d_{\vec{v}} f(x))$.\footnote{The absolute value is not needed for $\D^d f$.}
The forward direction of this characterization---correlation implies bias---is not hard to prove, even quantitatively (e.g., using\footnote{Indeed, if $\deg(P)<d$ then
$\bias(\D^d f) = \bias(\D^d(f-P)) \ge |\bias(f-P)|^{2^d} = \cor(f,P)^{2^d}$.} the Cauchy-Schwarz-Gowers inequality or the monotonicity of Gowers norms~\cite{Tao12}):
\begin{equation}\label{eq:GI-easy}
    \bias(\D^d f) \ge \cor_{<d}(f)^{2^d}
\end{equation}
where $\cor_{<d}(f) := \max_{\deg(P)<d} \cor(f,P)$.

The inverse direction is much harder.
For $d=2$, the question falls in the domain of Fourier analysis,
and it follows using Parseval's identity that $\cor_{<2}(f) \ge \bias(\D^2 f)^{1/2}$.\footnote{Indeed, the magnitudes of the Fourier coefficients of $g=e(f)$ are $\cor(f,\ell)$, for any linear form $\ell$ (or character), and $\bias(\D^2 f)=\sum_\g |\hat{g}(\g)|^4 
= \sum_\ell \cor(f,\ell)^4
\le \cor_{<2}(f)^2 \sum_\ell \cor(f,\ell)^2 = \cor_{<2}(f)^2$.}
For general $d$, however, the inverse question falls in the domain of higher-order Fourier analysis,
and is only known to hold with a very weak bound~\cite{GM20} (stronger bounds are known in the integer setting~\cite{LSS24}).
The inverse question is often stated 
in terms of the Gowers (uniformity) norms of a complex-valued function.
%
For $g \colon \F^n \to \mathbb{C}$, 
the $U^d$-norm is
$\norm{g}_{U^d} = (\Exp_{(x,\v) \in (\F^n)^{d+1}} {\D^*}_{v^{(d)}}\cdots{\D^*}_{v^{(1)}} g(x))^{1/2^d}$,
with $\D^*_v g(x) = g(x+v)\overline{g(x)}$,
and the weak Gowers norm is
$\norm{g}_{u^d} = \max_{\deg(P)<d}\big|\Exp_{x \in \F^n} g(x)e(-P(x))\big|$.


\begin{theorem}[Gowers inverse over finite fields~\cite{BTZ10,TZ10,TZ12}]\label{thm:GI}
    For any $g \colon \F^n \to \mathbb{C}$ with $\|g\|_{\infty} \le 1$, 
    \[\norm{g}_{u^d} \ge \Phi(\|g\|_{U^d})\]
    for some function $\Phi=\Phi_{d,\F}>0$.
\end{theorem}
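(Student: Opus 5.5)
This is the Bergelson–Tao–Ziegler / Tao–Ziegler inverse theorem, so I will follow the structure of their argument by induction on $d$, with no attempt at good quantitative dependence on $\|g\|_{U^d}$. The base case $d=2$ is the Fourier computation already recorded in the introduction, giving $\norm{g}_{u^2} \ge \norm{g}_{U^2}^2$. For the inductive step, write $\|g\|_{U^d}^{2^d} = \Exp_h \|\D^*_h g\|_{U^{d-1}}^{2^{d-1}}$. For a set $H$ of $h$ of density $\ge \delta^{O(1)}$, where $\delta=\|g\|_{U^d}$, we then have $\|\D^*_h g\|_{U^{d-1}} \ge \delta^{O(1)}$. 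By the inductive hypothesis, for each $h\in H$ there is a polynomial $P_h$ of degree $<d-1$ with $|\Exp_x \D^*_h g(x)\, e(-P_h(x))| \ge \Phi_{d-1}(\delta^{O(1)})=:\eta$.

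The next step is to show that $h\mapsto P_h$ behaves like a derivative. Following Gowers' approach to $U^3$ and its higher analogues, I apply Cauchy–Schwarz to the additive quadruples $h_1+h_2=h_3+h_4$ in $H$. For many such quadruples, the top-degree part of $P_{h_1}+P_{h_2}-P_{h_3}-P_{h_4}$ must be \emph{low rank}, meaning it is a function of few polynomials of lower degree. Here one uses the qualitative equidistribution theorem for high-rank polynomial factors of Green–Tao and Kaufman–Lovett, which requires characteristic larger than $d$. By a regularity argument, the top-degree parts of the $P_h$ can then be taken to range over a regular polynomial factor $\mathcal{B}$ of bounded complexity. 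Passing to a Balog–Szemer\'edi–Gowers / Freiman-type structure on $\{(h,\text{top}(P_h))\}$ modulo $\mathcal{B}$ yields a dense set of $h$ on which $h\mapsto \text{top}(P_h)$ agrees with a multilinear form $\phi(h,x,\dots,x)$ in $h$, up to $\mathcal{B}$-measurable errors.

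I expect the main obstacle to be the \emph{symmetrization} step, which I would try to do as in Green–Tao–Ziegler. One must show that the multilinear form $\phi(h,x,\ldots,x)$ is, modulo low rank, the derivative $\D_h$ of a degree-$(d-1)$ polynomial $Q$. This amounts to showing that the associated $d$-linear form is symmetric up to low-rank error, which again uses an equidistribution/rank argument in characteristic $>d$ so that $Q$ can be integrated by dividing by $d!$. Once $Q$ is in hand, set $g' = g\,e(-Q)$. Then $\D^*_h g'$ correlates with polynomials of degree $<d-1$ measurable with respect to the bounded-complexity factor $\mathcal{B}$ enlarged by lower-order terms, for many $h$. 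A final averaging argument shows that $g'$ correlates with a bounded-complexity $\mathcal{B}$-measurable function of degree-$<d$ polynomials.

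Finally, I remove the factor. Expanding a $\mathcal{B}$-measurable function in characters of the factor, $g'$ correlates with $e(R)$ for some polynomial $R$ of degree $<d$. This loses only a factor depending on the complexity of $\mathcal{B}$, which is bounded in terms of $\delta$. Hence $\norm{g}_{u^d}\ge |\Exp g\, e(-Q-R)| \ge \Phi_d(\delta)>0$. The only additional subtlety is that the degree bookkeeping uses $\ch(\F)>d$ throughout. In the low-characteristic case one would instead need the nonclassical polynomials of Tao–Ziegler, which is the setting cited in the theorem.
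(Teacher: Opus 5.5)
The paper does not prove this theorem. It quotes it from \cite{BTZ10,TZ10,TZ12}, where the proof is ergodic-theoretic: a correspondence principle plus a structure theory for the characteristic factors of $\F_p^{\omega}$-actions. It is not an induction on $d$ over additive quadruples. What you outline is the combinatorial route of Green--Tao--Ziegler and Gowers--Mili\'{c}evi\'{c}~\cite{GM20}. That route can be made to work in high characteristic, but your sketch skips exactly the step where it does its real work.

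\textbf{The bounded-factor claim is false.} You claim the top-degree parts of the $P_h$ ``can be taken to range over a regular polynomial factor $\mathcal{B}$ of bounded complexity.'' This fails already for $g=e(C)$ with $C\in\Forms_{d-1}$ of high rank. Any admissible $P_h$ must have degree-$(d-2)$ part equal to $\partial_h C$ up to low rank. Suppose a dense set of these were $\mathcal{B}$-measurable. Pigeonholing over the boundedly many classes modulo low rank that $\mathcal{B}$ allows would give a dense set of differences $u$ with $\partial_u C$ of low rank. Then $\bias(\polar{C})=\Exp_u\bias(\polar{C}(\cdot,u))$ would be bounded below, contradicting the high rank of $C$.

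\textbf{The errors vary, so BSG/Freiman does not apply.} What Cauchy--Schwarz actually gives is weaker: for many quadruples, the top part of $P_{h_1}+P_{h_2}-P_{h_3}-P_{h_4}$ has low rank, with an error that depends on the quadruple. These errors cannot all be absorbed into one bounded-complexity factor. Forms of rank at most $r$ also do not form a subgroup, since the rank bound degrades with each addition. So Balog--Szemer\'edi--Gowers and Freiman cannot simply be run ``modulo $\mathcal{B}$'' to produce the multilinear $\phi$.

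Upgrading such an approximate homomorphism, modulo varying low-rank errors, to a genuine multilinear form is the heart of the combinatorial proofs. Green--Tao--Ziegler do it with their linearization arguments, and Gowers--Mili\'{c}evi\'{c} with their inverse theorem for Freiman multi-homomorphisms. For $d=3$ there are no such errors, and BSG/Freiman applied to the graph of $h\mapsto\mathrm{top}(P_h)$ suffices. For $d\ge 4$ your proposal gives no mechanism for this step.

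\textbf{Symmetrization is not argued.} You name the symmetrization step as the main obstacle but give no argument for it.

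\textbf{Minor.} Integrating to $Q$ of degree $d-1$ requires $(d-1)!$, not $d!$, to be invertible, so $\ch(\F)\ge d$ suffices.
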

In the representative case where $g=e(f)$ with $f \colon \F^n\to\F$ (e.g., Lemma~1.2 in~\cite{TZ10}), 
the Gowers norm is simply $\norm{g}_{U^d} = \bias(\D^d f)^{1/2^d}$,
so Theorem~\ref{thm:GI} is a qualitative inverse to~(\ref{eq:GI-easy}):
\begin{equation}\label{eq:GI}
    \exists \Phi>0 \colon \cor_{<d}(f) \ge \Phi(\bias(\D^d f)).
\end{equation}
We note that if the field has small characteristic $\ch(\F)<d-1$, the maximum in Theorem~\ref{thm:GI}, over $P$ with $\deg(P) < d$,
must also range over so-called \emph{non-classical} polynomials of degree below $d$ (see~\cite{TZ12} and~\cite{BSST22,GT09,LMS11}), which are functions $\F^n\to\mathbb{R}/\mathbb{Z}$ that involve division by powers of $\ch(\F)$. 
In this paper we restrict ourselves to the high characteristic case (and so, to classical polynomials).


The Gowers inverse question goes back to foundational work of Gowers~\cite{Gow01}, which led to the development of higher-order Fourier analysis, and seminal works of Green and Tao on additive patterns in the primes~\cite{GT10} and the $U^3$-norm~\cite{GT08}, and of Samorodnitsky~\cite{Sam07} on polynomiality testing.
It has found applications in a large number of areas (see, e.g.,~\cite{BV10,HHL19,Pel24,Tao12,TZ10,Wol15} and the references within).
However, the lack of strong bounds for Theorem~\ref{thm:GI} implies the same for the applications.

The \emph{Polynomial Gowers Inverse} conjecture~\cite{GT08}
posits that, in fact, the relationship between bias and correlation is about as tight as it could be. Namely, the conjecture is that $\Phi(x)$ in Theorem~\ref{thm:GI} can be as large as a power of $x$ for any fixed $d$: $\Phi(x) \ge x^{O_d(1)}$ (this matches the easy direction $\Phi(x) \le x^{1/2^{d}}$ in~(\ref{eq:GI-easy}) up to the constant). 
It is perhaps the central conjecture of higher-order Fourier analysis.

While a recent breakthrough in~\cite{GGMT24,GGMT25} was achieved in the case $d=3$, the Polynomial Gowers Inverse conjecture for any $d>3$ remains wide open.

\subsection{Gowers inverse in low degree}

Let $\GI_d(k)$ denote
the special case of the inverse theorem for the Gowers $U^d$-norm~(\ref{eq:GI}) where 
the function $f \colon \F^n \to \F$ is a polynomial of degree at most $k$, and $\ch(\F)>k$.
Green and Tao~\cite{GT09} proved $\GI_d(k)$, in what was perhaps the first substantial result towards the Gowers inverse theorem.
Their proof uses a regularity lemma for polynomials, applied inductively on $k$, 
which results in an extremely weak, Ackermann-type bound.
In particular, for $\GI_d(d+1)$, their proof yields a bound that is at best exponential: $\Phi(x) = \exp(-(1/x)^{O_d(1)})$.\footnote{Even if one were to use an optimal bound in the induction basis $k=d$. This stems from the fact that the regularization step in Lemma~2.4 of~\cite{GT09}, even when applied with a linear growth function $F$, yields a partition with an exponential number of parts. }
Very recently in~\cite{MR25}, $\GI_d(d)$ was proved with a polynomial bound, the first\footnote{$\GI_d(k)$ is trivially true for $k<d$, since $\deg(f)<d$ implies $\cor_{<d}(f)=1$.} general case of the \emph{Polynomial} Gowers Inverse conjecture.
This result relates to a separate line of work on the partition-versus-analytic rank conjecture for multilinear forms (e.g.~\cite{Mil19,Jan19,CM22,AKZ21,CM23,MZ24,CY25}). Later, we explain in detail that connection, and how it relates to our work. 

In this paper we obtain an almost-polynomial bound for $\GI_d(d+1)$, the next open case.
With a slight abuse of notation, given $x \in (0,1]$ we write $x^{\Ot_d(1)}$ for 
$\exp(-O_d(\log(1/x)^{1+o(1)}))$.\footnote{For completeness, set $0^{\Ot_d(1)}=0$.}
(This is even stronger than a quasi-polynomial bound,
which has a constant $O_d(1)$ at the top instead of $1+o(1)$.)\footnote{
More precisely, we get $\log(1/x)\cdot\log(\log(1/x)+1)$ in the exponent.
(We even get a bound that improves as $|\F|$ increases, of the shape $x^{O_d(\log_{|\F|}(\log_{|\F|}(1/x)))}$, which for $|\F| \gg \log(1/x)$ is truly polynomial $x^{O_d(1)}$; see~\cite{MZ24}.)}

\begin{main}\label{thm:d+1-PGI-poly}
    Let $d \ge 1$.
    For every $f \colon \F^n \to \F$ with $\deg(f) \le d+1$, where $d+1 < \ch(\F)$,
    \[\cor_{<d}(f) \ge \bias(\D^d f)^{\Ot_d(1)}.\]
    
\end{main}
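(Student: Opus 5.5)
We write $f = F + f_{\le d}$, where $F$ is the homogeneous degree-$(d+1)$ part of $f$, and we will find a polynomial $P$ with $\deg P<d$ whose correlation with $f$ is at least $\epsilon^{\Ot_d(1)}$, where $\epsilon=\bias(\D^d f)$. The structure is what was used for $\GI_d(d)$ in~\cite{MR25}, with one extra degree to absorb.

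\textbf{Step 1: bias of $\D^d f$ as a rank statement.} Since $\deg f\le d+1$, the derivative $\D^d_{\v}f(x)$ is affine in $x$. Its linear part is $(d+1)!\,\polar{F}(x,v^{(1)},\dots,v^{(d)})$, where $\polar{F}$ is the symmetric $(d+1)$-linear form of $F$. The remaining part is a $d$-linear form in $\v$ that comes from $f_{\le d}$ and from lower-order terms of $F$. Averaging over $x$ gives
\[\epsilon=\Pr_{\v}\big[\polar{F}(\cdot,\v)=0\big]\cdot\big|\Exp_{\v:\polar{F}(\cdot,\v)=0} e(\Lambda(\v))\big|,\]
where $\Lambda$ is the relevant $d$-linear form. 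This yields two conclusions.
\begin{itemize}
\item The analytic rank of $\polar F$ is at most $\log_{|\F|}(1/\epsilon)$.
\item $\Lambda$ is biased on the variety $\{\v : \polar{F}(\cdot,\v)=0\}$.
\end{itemize}

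\textbf{Step 2: partition rank decomposition of $F$.} By the partition-versus-analytic rank results cited in the introduction (\cite{MZ24} and the subsequent work), $\polar F$ has partition rank $r\le O_d(\log(1/\epsilon))\cdot\log(\log(1/\epsilon)+1)$. This is exactly where the $\Ot_d(1)$ loss comes from. Symmetrizing, we get
\[F=\sum_{i\le r} Q_iR_i,\qquad \deg Q_i+\deg R_i=d+1,\quad \deg Q_i,\deg R_i\ge 1.\]

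\textbf{Step 3: pass to a level set of the lower-degree pieces.} The key point is that we need correlation with polynomials of degree $<d$, while pieces such as $Q_i$ of degree $d$ (with $R_i$ linear) are not allowed. Let $\mathcal{L}$ be the set of factors of degree $<d$ appearing in the decomposition. We pass to a level set of the polynomials in $\mathcal{L}$; on each such set the corresponding products reduce to polynomials of lower degree.

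The factors of degree $d$ paired with linear forms $\ell_i$ need different handling. We restrict to the subspace $\{\ell_i=c_i\}$, on which $Q_i\ell_i=c_iQ_i$ has degree $d$. On that subspace, $f$ restricts to a polynomial of degree $\le d$ plus products of lower-degree terms.

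We then want to apply $\GI_d(d)$ with the polynomial bound of~\cite{MR25} on the subspace. This requires two inputs.
\begin{itemize}
\item The restriction still has $U^d$-bias at least $\epsilon^{O(1)}|\F|^{-O(r)}$. To get this, we apply a Cauchy--Schwarz argument to the bias over cosets.
\item The resulting correlations, of the form (level-set indicator) $\times$ (polynomial of degree $<d$), lift to a correlation with a single polynomial of degree $<d$ on $\F^n$, at a cost of $|\F|^{-O(r)}$. Here we use Fourier expansion of the indicators of the $\ell_i$ and of the polynomials in $\mathcal{L}$, all of which have degree $<d$.
\end{itemize}

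\textbf{Main obstacle.} The step most likely to fail is the correlation transfer in Step 3. It requires a correlation lemma guaranteeing that the bias of $\D^d f$ does not collapse on some coset. The difficulty is that the degree-$d$ factors $Q_i$ cannot simply be fixed to constants, because doing so is not allowed at degree $<d$.

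The first thing to try is a refined decomposition. We would show that $F$ can be chosen so that every product uses at least one factor of degree $<d$, except for terms $Q_i\ell_i$ with linear $\ell_i$. Terms of the second kind are then handled by the subspace restriction. The refinement itself would be obtained by regularizing the degree-$d$ factors, and would use the bias of $\Lambda$ on the variety from Step 1.
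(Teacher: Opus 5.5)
\textbf{The gap is in the products without a linear factor.}

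Your treatment of the linear factors is sound. Passing to a coset costs no $U^d$-bias: Gowers--Cauchy--Schwarz and H\"older give $\norm{\phi}_{U^d(\F^n)}^{2^d}\le\Exp_y\norm{\phi}_{U^d(y+W)}^{2^d}$. The lift via $\1_U(x)=\Exp_s\chi\big(\sum_i s_i(\ell_i(x)-\ell_i(u_0))\big)$ costs only $\P(U)\ge q^{-r}$. So when every product in Step~2 has a linear factor (e.g.\ $d=2$), your argument genuinely reduces to $\GI_d(d)$.

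For $d\ge3$, a term $Q_iR_i$ with $2\le\deg Q_i\le\deg R_i\le d-1$ survives restriction to every affine subspace. So $f|_U$ still has degree $d+1$, and $\GI_d(d)$ does not apply to it. Passing to level sets $L_a=\{Q_i=a_i\}$ of these factors does not rescue this:
\begin{itemize}
\item they are not subspaces, so neither the coset inequality nor $\GI_d(d)$ can be run on them;
\item a global correlation of the degree-$\le d$ polynomial $f^{(a)}=f-\sum_i(Q_i-a_i)R_i$ (which equals $f$ on $L_a$) with some $P$ says nothing about $\Exp[\1_{L_a}\chi(f^{(a)}-P)]$.
\end{itemize}
For your Fourier lift to work, $f$ must be, modulo degree $<d$, a short sum of products of factors of degree $<d$. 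That requires a rank bound on the degree-$d$ part, and nothing in Steps~1--3 turns ``$\Lambda$ is biased on $E$'' into one. By Remark~\ref{remark:D-bias-nonhom}, $f_d$ itself need not be structured.

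Your proposed refinement does not supply this bound. It is vacuous: by degree counting, a degree-$(d+1)$ product without a linear factor already has both factors of degree $\le d-1$. A regularization step would also typically destroy the nearly-polynomial bound. A minor point: the $x$-free part of $\D^d_{\v}f(x)$ is not $d$-linear, since it contains $\tfrac12\sum_t\polar{g}(\v,v^{(t)})$ with $g=f_{d+1}$. It reduces to $\polar{h}$, $h=f_d$, only on $E$.

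\textbf{The missing idea is the paper's identity.} After the substitution $x'=x+\tfrac12\sum_tv^{(t)}$ of Proposition~\ref{prop:dd}, expand $\1_E(\v)=\Exp_c\chi(-\polar{g}(\v,c))$. This gives $\bi(\D^df)=\Exp_c\bi(\polar{h}-\partial_c\polar{g})$. Hence for some $c$ the $d$-linear form $\polar{h}-\partial_c\polar{g}$ has bias $\ge\epsilon$, and Theorem~\ref{thm:SvR} yields $\rk(h-\partial_cg)=\Ot_d(\log_q(1/\epsilon))$.

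The paper then writes $g+\partial_cg=\sum_i(\a_i+\partial_c\a_i)(\b_i+\partial_c\b_i)-\sum_i\partial_c\a_i\,\partial_c\b_i$, whose linear factors are affinely independent. It converts the resulting short $\rk^*$-decomposition of $g+h+\g$ into correlation with degree $\le d-1$ via the perturbation Lemma~\ref{lemma:pert} and Lemma~\ref{lemma:avg-cor}.

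Your coset reduction is a genuinely different way to dispose of the linear factors, and with this identity it closes. On the good coset, the top part of $f|_U$ is a sum of products of two forms of degree $\ge2$. The derivative $\partial_c$ of such a product is a sum of two reducible degree-$d$ forms. So the degree-$d$ part of $f|_U$ has rank $\Ot_d(\log_q(1/\epsilon))$. Then $f|_U$ is, modulo degree $<d$, a short sum of products of forms of degree $<d$. Corollary~\ref{coro:corr} plus your lift over $\1_U$ finishes, without needing $\GI_d(d)$.

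A route closer to your own Fourier idea also works. Let $m$ be the number of such products, and apply the triangle inequality for $\norm{\cdot}_{U^d}$ to $\chi(f|_U)=\sum_a\Exp_s\chi(f^{(a)}+s\cdot(Q-a))$. Since all the $f^{(a)}$ share the same degree-$d$ part, this gives that part a polarization of bias $\ge q^{-2^dm}\epsilon$.
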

Put differently, $\norm{g}_{u^d} \ge \|g\|_{U^d}^{\Ot_d(1)}$
for any polynomial phase $g\colon \F^n\to\CC$ of degree at most $d+1$ (that is, $g=e(f) $ with $\deg(f) \le d+1$).

A main difficulty in the proof of Theorem~\ref{thm:d+1-PGI-poly} stems from the non-homogeneity of $f$. 
Standard proofs of the degree-$d$ case utilize the fact that if $\deg(f)\le d$, 
then the $d$-times derivative $\D^d f$ is a multilinear form, a class of polynomials for which we have good understanding of the interplay between structure and randomness. Concretely, we know how to convert high $\bias(\D^d f)$ into a decomposition of $f_d$---the top-degree homogeneous component of $f$---in terms of few lower-degree polynomials.
A standard pigeonhole argument is then used to obtain a significant correlation for $f_d$ with a polynomial $P$ of lower degree, $\deg(P) \le d-1$,
and thus for $f$ itself by absorbing its lower-degree components into $P$.

However, when $\deg(f)=d+1$ and $f$ is not homogeneous, one obtains from the multilinear argument (via $\D^{d+1} f$, which is also biased) only a decomposition of the top-degree component $f_{d+1}$ of $f$, so absorbing the rest of $f$---which is of degree $d$---into a correlating polynomial $P$ with $\deg(P) \le d-1$
is no longer an option.
In fact, it is entirely possible for the degree-$d$ component $f_d$ of $f$ to not have any short decompositions, even if $\bias(\D^d f)$ is large (e.g., $f=(x_1+1)h$ for a random $h$, see Remark~\ref{remark:D-bias-nonhom}).
Furthermore, even if one were to successfully obtain a short decomposition of $f$ itself in terms of polynomials of degrees below $\deg(f)$,
it is not clear how to use this to obtain a correlation with a polynomial of degree as small as $d-1=\deg(f)-2$.\footnote{Moreover, unlike in the degree-$d$ case, one cannot guarantee 
a short decomposition with homogeneous factors. In particular, the degree-$1$ factors in a decomposition of $f$ might not have a common zero (as they need not be homogeneous; e.g., $x_1$ and $x_1+1$), and so there might not even be a large (affine) subspace where the degree-$1$ factors can be discarded (which would have at least meant that all factors have degree at most $\deg(f)-2$ there).}

In the homogeneous setting, 
we can do more. Namely, we obtain almost-polynomial bounds 
for homogeneous polynomials of degree smaller than $2d$.

\begin{main}\label{thm: homogeneous GI 2d}
    Let $d \ge 1$.
    For homogeneous $f \colon \F^n \to \F$ with $\deg(f) < 2d$, 
    where $\deg(f) < \ch(\F)$, 
    \[ \cor_{<d}(f) \ge \bias(\D^d f)^{\Ot_d(1)} .\]
\end{main}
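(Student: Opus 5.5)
Let $k=\deg(f)$ with $d \le k < 2d$; the case $k<d$ is trivial since then $\cor_{<d}(f)=1$. The plan has three steps: pass from bias of $\D^d f$ to bias of a $d$-linear form, turn that into a short decomposition of $f$ by products of lower-degree forms, and then pigeonhole on the values of those forms to get correlation.

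\textbf{Step 1: reduce to a multilinear form.} For homogeneous $f$ of degree $k$ with $k<\ch(\F)$, the $d$-th derivative $\D^d_{\v} f(x)$ decomposes by homogeneity in $x$. Its part of degree $k-d$ in $x$ is, up to the nonzero factor $k!/(k-d)!$, the polarization $\polar{f}(x,\dots,x,v^{(1)},\dots,v^{(d)})$. Since the total $x$-degree is at most $k-d<d$, I will fix the $x$-variables by averaging: for a typical $x$, the function $\v \mapsto \D^d_\v f(x)$ is a polynomial of degree at most $k$ in $\v$. Its component that is multilinear in $(v^{(1)},\dots,v^{(d)})$ is the $d$-linear form $T_x(\v)=c\,\polar{f}(x^{k-d},\v)$. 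By the standard Cauchy--Schwarz argument (or by taking a further $d$-fold derivative in $\v$), the bias of $\D^d f$ is bounded by an average over $x$ of $\bias(T_x)$, up to polynomial loss. So for a $\bias(\D^d f)^{O(1)}$ fraction of $x$, the form $T_x$ has analytic rank $O(\log(1/\bias(\D^d f)))$.

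\textbf{Step 2: get a decomposition.} Next I invoke the nearly-polynomial partition-rank versus analytic-rank bound (e.g.~\cite{MZ24}) to write $T_x$ as a sum of $r=\Ot(\log(1/\bias))$ products $Q_i(\v_{S_i})R_i(\v_{\bar S_i})$ with nonempty $S_i \subsetneq [d]$. Substituting $v^{(j)}=x$ for all $j$ and using homogeneity, $f(x)$ is, up to a constant, $\polar{f}(x^{k-d},x^d)$. This is where $k<2d$ matters: I want a decomposition of $f$ into products of forms each of degree less than $d$. The idea is to apply Step 1 to the $(k-d)+d$ split more cleverly, by setting $v^{(j)}=x$ for $j\le 2d-k$ only. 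Then the parts of the decomposition indexed by $S_i$ have degree $\le d-1$ whenever both sides are proper. Fixing a good $x_0$ and extending linearly over a typical shift, I would then obtain $f=\sum_{i\le r} A_iB_i + (\text{terms of degree}<d)$ with $\deg A_i,\deg B_i<d$. This transfer from a single good $x$ to all of $f$ is the main obstacle. I expect to handle it with the paper's refined decomposition notion and correlation lemma: one requires each factor to have degree $\le d-1$, which uses $k-\max(|S_i|)\le d-1$, i.e.\ $k<2d$.

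\textbf{Step 3: correlation.} Given $f=\sum_{i\le r}A_iB_i+L$ with all factors of degree $<d$, pigeonhole on the values of $(A_1,\dots,A_r)\in\F^r$. Fourier-expand the indicator of a level set $\{A=a\}$ as $|\F|^{-r}\sum_{\lambda}e(\lambda\cdot(A-a))$. On that level set $f$ agrees with $\sum_i a_iB_i+L$, which has degree $<d$. Hence some phase $\lambda\cdot A+\sum_i a_iB_i+L$ of degree $<d$ correlates with $f$ at least $|\F|^{-O(r)}=\bias(\D^d f)^{\Ot_d(1)}$, which gives the theorem.
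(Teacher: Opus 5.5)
There is a genuine gap. It sits where you flagged it, in Step 2, and it carries into Step 3.

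A low-partition-rank decomposition of $T_{x_0}=c\,\polar{f}(x_0,\dots,x_0,\cdot)$ for one good $x_0$, or for many, says nothing about $f$ itself. Substituting $v^{(j)}=x$ decomposes the degree-$d$ form $x\mapsto\polar{f}(x_0,\dots,x_0,x,\dots,x)$, which is not $f$. If you instead let $x_0=x$ vary, the factors $Q_i,R_i$ depend on $x$ arbitrarily, so no polynomial identity for $f$ results.

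Worse, your target decomposition $f=\sum_i A_iB_i+(\text{degree}<d)$ with $\deg A_i,\deg B_i<d$ cannot exist in general. For $k=2d-1$ the right-hand side has degree at most $2d-2$. A partition-rank decomposition only guarantees that the \emph{smaller} factor in each product has degree at most $k/2<d$; the other factor can have degree up to $k-1\ge d$. So ``$k-\max|S_i|\le d-1$'' does not follow from $k<2d$. Your Step 3 pigeonholes on an arbitrary popular level set $\{\vec{A}=a\}$, where $f$ agrees with $\sum_i a_iB_i+L$, so it needs $\deg B_i<d$ and breaks down.

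Two ideas are missing.

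First, do not fix $x$. By monotonicity of Gowers norms, $\bias(\polar{f})=\bias(\D^k f)\ge\bias(\D^d f)^{2^{k-d}}$, and $\polar{f}$ is a $k$-linear form. Theorem~\ref{thm:SvR} then gives $\polar{f}=\sum_{i\le r}Q_iR_i$ with $r=\Ot_d(\log_q(1/\bias(\D^d f)))$. This also makes your Step 1 comparison of $\bias(\D^d f)$ with $\Exp_x\bias(T_x)$ unnecessary, which is fortunate, since it is unjustified: $\v\mapsto\D^d_\v f(x)$ is not multilinear. Order each product so that $\deg Q_i\le k/2<d$ and plug in the diagonal. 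You get only ideal membership $f\in\langle A_1,\dots,A_r\rangle$ with $A_i=Q_i(x,\dots,x)$ homogeneous of degree $<d$, with no control on the cofactors. That is all you need.

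Second, use the zero level set rather than the most popular one. Since $\Z(\vec{A})\subseteq\Z(f)$, Lemma~\ref{lemma:avg-cor} gives $\Exp_{c}\bi(f-\sum_i c_iA_i)=\P(\Z(\vec{A}))$. The $A_i$ are forms, so $0\in\Z(\vec{A})$, and Warning's second theorem (Theorem~\ref{theorem: warning}) gives $\P(\Z(\vec{A}))\ge q^{-\sum_i\deg A_i}\ge q^{-(d-1)r}$. Because this works on the zero set, only the small factors need degree $<d$, and that is exactly what $k<2d$ guarantees. No refined rank notion is needed for this theorem.
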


The proof of Theorem~\ref{thm: homogeneous GI 2d} relies on a new, general correlation lemma. 
Standard correlation lemmas/arguments in the literature (e.g.~\cite{Jan19}, proof of Theorem~1.6; or~\cite{GT09}, beginning of the proof of Theorem~1.4; or~\cite{KL08}, proof of Theorem 3; 
or~\cite{TZ12}, beginning of the proof of Theorem~1.10 from~Theorem 1.18; and many more)
use a simple pigeonhole-type argument on Fourier coefficients to obtain significant correlation with a lower degree polynomial, 
provided we have a full decomposition of the given polynomial.
Our new correlation lemma, on the other hand, requires a more relaxed condition, closely related to membership in an appropriate ideal.


Furthermore, as another application of our new correlation lemma, we show in Section~\ref{subsection: degree d} how it can be used to provide an alternative proof of a polynomial bound for $\GI_d(d)$
from a weak version of the partition-versus-analytic rank dichotomy~\cite{Mil25}.

\section{Preliminaries and proof outline}

\subsection{Preliminaries}

From now on $\F$ is an arbitrary finite field.
We write $\bi$ (in Fraktur letters) for the bias as a complex number:
for any function $f \colon \F^n \to \F$ over a finite field $\F$, denote
\[\bi(f) = \Exp_{x \in \F^n} \chi(f(x)) \in \mathbb{C} \quad\text{ and }\quad \bias(f)=|\bi(f)| \in [0,1]\]
where $\chi$ is any nontrivial additive character of $\F$ that we henceforth fix (for example, for $\F=\F_p$ prime, we can take $\chi(y)=e^{2\pi i y/p}$). 
Note $\cor(f,P) = \bias(f-P) \in [0,1]$.
We use bold letters, such as $\vec{A}=(A_1,\ldots,A_m)$, to emphasize that the object in question is a vector/tuple.
We denote the indicator function for a set $S$ by $\1_S(x) = \begin{cases}
    1 & x \in S\\
    0 & x \notin S
\end{cases}$.

We denote by $\Poly_k(\F)$ the set of (formal) polynomials of degree at most $k$, over the field $\F$, in any (finite) number of variables $x_1,\ldots,x_n$.\footnote{When combining polynomials, they are understood to be defined on the same variable set (adding dummy variables if needed).}
A \emph{form} is a homogeneous polynomial. We denote by $\Forms_k(\F)$ the set of forms of degree $k$ ($\ge 0$) over $\F$ in any number of variables. We simply write $\Forms_k$ when $\F$ is immaterial, or $\Forms$ when $k$ is immaterial as well.
Note that any $f \in \Poly_k$ is the sum of forms, namely, the homogeneous components of $f$ of each degree: $f = \sum_{i=0}^k f_i$ with $f_i \in \Forms_i$. We also denote $f_{<d} := \sum_{i=0}^{d-1} f_i$.

The \emph{rank} of a form (also called strength or Schmidt rank), denoted by $\rk(f)$, is the minimal $r$ for which $f$ is the sum of reducible forms:
\[ f= \sum_{i=1}^r g_ih_i \]
for some forms $g_i$ and $h_i$ of (positive) degree smaller than $\deg(f)$.
A form is \emph{multilinear} if it is linear in each of its variable vectors; more concretely, a multilinear form of degree $d$ ($\ge 1$), or a \emph{$d$-linear form} for short, is of the form $T=T(x^{(1)},\ldots,x^{(d)}): (\F^n)^d \to \F$
and is linear in each of $x^{(1)},\ldots,x^{(d)}$.
Equivalently, any monomial in the support of $T$ has one variable from each of $x^{(1)},\ldots,x^{(d)}$.
The \emph{partition rank}~\cite{Nas20}
of a $d$-linear form $T$, denoted by $\PR(T)$, is the minimal $r$ for which $T$ is the sum of reducible $d$-linear forms: 
\[ T = \sum_{i=1}^r R_iQ_i \]
for some multilinear forms $R_i = R_i(x_{I_i})$ and $Q_i=Q_i(x_{I_i^{c}})$
with $\emptyset \subsetneq I_i \subsetneq [d]$,
where  for $x \in (\F^n)^d$ we write $x_{I}$ for the restriction of $x$ to the coordinates corresponding to $I \sub [d]$. 
A related notion to the partition rank is that of the \emph{analytic rank}~\cite{GW11} over a finite field $\F$, denoted by $\AR(T)$:
\[ \AR(T) := -\log_{|\F|}(\bias(T)).\]
We note that for $T$ multilinear, $\bias(T)$ also has an elementary  definition: $\bias(T) = \P(T=0)-\P(T=c)$ ($\ge 0$), for any choice of nonzero $c \in \F$.\footnote{Indeed, 
multilinearity implies that $\P(T=t)$ is the same for every $t \in \F^\times$, so
$\bi(T) = \sum_{t \in \F} \P(T=t)\chi(t)
= \P(T=0)+\P(T=c)(\sum_{t \in \F}\chi(t)-\chi(0)) = \P(T=0)-\P(T=c)$.}
Moreover, $\AR(T) \le \PR(T)$ (e.g.,~\cite{Lov19}).

A fundamental part of our argument relies on the best bound known for the partition rank of biased multilinear forms.\footnote{For completeness, note that if $d=1$ then $\AR(T)=\PR(T)=\infty$.}
We write $\Ot_d(x) := O_d(x^{1+o(1)})$.\footnote{More precisely, we can get $\Ot_d(x)=O_d(x\log(x+1))$. (In fact, we can even get
$\Ot_d(x) = O_d(x(\log_{|\F|}(x+1)+1))$, which in particular is linear for mildly large $\F$).}

\begin{theorem}[\cite{MZ24}]\label{thm:SvR}
    For every $d$-linear form $T$ over any finite field,
    \[\PR(T) = \Ot_{d}(\AR(T)) .\]
%
\end{theorem}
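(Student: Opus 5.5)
The plan is to go through the \emph{zero-set geometry} of $T$ rather than through Fourier analysis. Write $r=\AR(T)$ and $q=|\F|$. By multilinearity in the last coordinate, averaging $\chi(T)$ over $x^{(d)}$ kills every term except those where the linear form $T(x^{(1)},\ldots,x^{(d-1)},\cdot)$ vanishes identically. Hence
\[\bias(T)=\P_{x^{(1)},\ldots,x^{(d-1)}}\big(T(x^{(1)},\ldots,x^{(d-1)},\cdot)\equiv 0\big)=\frac{|Z|}{q^{n(d-1)}},\]
where $Z\sub(\F^n)^{d-1}$ is the variety cut out by the $n$ multilinear $(d-1)$-forms $T_j(x^{(1)},\ldots,x^{(d-1)})$, the coefficients of $x^{(d)}_j$. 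So the hypothesis says that $Z$ has density $q^{-r}$. The goal becomes a structural statement: if a system of multilinear forms has a common zero set of density $q^{-r}$, then, after removing $\Ot_d(r)$ reducible pieces, all the $T_j$ vanish identically. Since $T=\sum_j x^{(d)}_j T_j$, such a statement gives the partition-rank decomposition of $T$ directly.

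I would prove the structural statement by induction on $d$, using a \emph{density increment} scheme. Fix a typical fiber: for most $x^{(1)}$ in the projection of $Z$, the $(d-2)$-linear system $T_j(x^{(1)},\cdot)$ has a zero set of density about $q^{-r}$. By the induction hypothesis applied to the $(d-1)$-linear form $\sum_j y_j T_j(x^{(1)},\cdot)$, that form has partition rank $\Ot_d(r)$. The difficulty is that this decomposition depends on $x^{(1)}$.

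To remove the dependence, use that the set of good $x^{(1)}$ is dense. Pigeonholing over the $q^{O(r)}$ possible ``types'' of decomposition factors, we find a linear subspace $V$ of codimension $O(r)$ in the $x^{(1)}$-coordinate on which a single family of lower-order factors works uniformly, up to linear dependence on $x^{(1)}$. Restricting to $V$ costs $O(r)$ partition-rank pieces, since a codimension-$c$ restriction in one coordinate is accounted for by $c$ terms of the form $\ell(x^{(1)})\cdot Q$. In exchange, either the bias of the remaining form increases by a constant factor in $\log_q$ scale, or the structure is already complete.

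Each round of this increment either halves the remaining analytic rank or terminates. The remaining analytic rank after a round is $r/2$ up to lower-order terms. Thus there are $O(\log r)$ rounds, the costs sum to $O_d(r)$ per level, and only $O(\log r)$ levels carry a nontrivial loss. This is exactly where the extra $\log(r+1)$ factor in $\Ot_d$ comes from.

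The main obstacle is the uniformization step: showing that the fiberwise decompositions can be chosen to depend \emph{linearly} on $x^{(1)}$ on a subspace of codimension only $O(r)$, rather than $O(r^{C})$. A naive regularity argument loses a power or worse here. I would first try to get this from the dimension count for $Z$, namely that $Z$ contains a linear-algebraic ``cone'' of dimension at least $n(d-1)-O(r)$ through a typical point. Concretely, I would look for a large subspace inside the fiber of $Z$ over a typical point, and read the factors off the tangent/linear part of $T$ along that subspace. Once that subspace is found, the rest is bookkeeping of codimensions.
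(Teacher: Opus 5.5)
The paper gives no proof of this theorem; it quotes it from \cite{MZ24}. Roughly, that argument is algebraic rather than a density increment. Partition and analytic rank are linearly equivalent over sufficiently large fields, and a general $\F_q$ is handled by passing to an extension of degree about $\log_q \AR(T)$. That extension degree is where the logarithm in $\Ot_d$ comes from; compare the refinement $O_d(x(\log_{|\F|}(x+1)+1))$ quoted in the paper.

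Your proposal has a genuine gap, located exactly where the content of the theorem lies. The identity $\bias(T)=|Z|/q^{n(d-1)}$ is correct. However, the ``structural statement'' you reduce to is the theorem restated, since $T=\sum_j x^{(d)}_jT_j$. So everything rests on the uniformization step, which you do not supply, and the steps you offer toward it fail as stated.

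\textbf{Fibres.} It is not true that most fibres have density about $q^{-r}$. Take $T=\sum_{i\le r}x^{(1)}_iQ_i(x^{(2)},\ldots,x^{(d)})$ with random $Q_i$ and $n$ large. Then $\AR(T)\approx r$, but the fibre over $x^{(1)}$ has density $1$ on the codimension-$r$ subspace $x^{(1)}_1=\cdots=x^{(1)}_r=0$ and negligible density off it. Averaging only gives a \emph{sparse} set of good $x^{(1)}$, of density about $q^{-r}$.

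\textbf{Types.} There are no ``$q^{O(r)}$ types'' to pigeonhole over. The fibrewise decompositions supplied by induction consist of arbitrary multilinear forms on $\F^n$, of which there are $q^{\mathrm{poly}(n)}$. They are far from unique, and nothing forces them to vary linearly in $x^{(1)}$. Extracting a subspace of codimension $O(r)$ from a density-$q^{-r}$ set of good points, on which the fibrewise decompositions glue into one that is multilinear in $x^{(1)}$, is precisely the hard part. Known combinatorial arguments of this kind \cite{Jan19,Mil19} lose a power of $\AR(T)$ in this process and give only $\PR(T)\le\AR(T)^{O_d(1)}$.

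\textbf{The increment.} The dichotomy ``either the analytic rank of the remaining form halves, or the structure is complete'' comes with no mechanism; you do not even specify what the remaining form is. The accounting also does not produce the claimed bound. If a round at analytic rank $r'$ costs $O(r')$ and halves it, the costs sum geometrically to $O(r)$, so no logarithm appears and your stated source of the $\log$ is not this. If instead a round costs the full fibrewise bound, the losses multiply across the $d-1$ levels of the induction.

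\textbf{The fallback.} Finding a linear ``cone'' of codimension $O(r)$ inside $Z$ is a weak-structure statement in the spirit of Theorem~\ref{theorem: Milicevic multilinear variety}. A containment $\Z(\vec S)\sub\Z(\vec T)$ does not place the $T_j$ in an ideal generated by few lower-order forms. This is why, as the paper itself remarks, such a result suffices for $\GI_d(d)$ via Corollary~\ref{coro:corr} but does not yield a decomposition.

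As it stands, the proposal correctly identifies the obstacle but does not overcome it.
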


The \emph{polarization} of a form $g \in \Forms_k$, which we denote by $\polar{g}$, 
is the $k$-linear form $\polar{g} = \D^k g$, given by the discrete derivative of order $k$; that is, $\polar{g}(\vec{x}) = \D^k_{\vec{x}} g$ where $\vec{x}=(x^{(1)},\ldots,x^{(k)})$.
(Note that if $g \colon \F^n \to \F$ then $\polar{g}\colon (\F^n)^k \to \F$; indeed, $\D^k_{\vec{x}} g$ depends only on $\vec{x}$, being independent of the point at which we take the discrete derivative, as $k=\deg(g)$.)
If $k < \ch(\F)$, $g$ can be recovered from its polarization: $g(x)=\polar{g}(x,\ldots,x)/k!$ (and $\overline{g}$ is the unique symmetric $k$-linear form satisfying this); see Fact~\ref{fact:diagonal} below.
%
For example, for $g(x)=x_1x_2$, 
we have $\polar{g}(x,y) = x_1y_2 + y_1x_2$,
and for $g(x)=x_1^2$
we have $\polar{g}(x,y) = 2x_1y_1$ (so $\polar{g}(x,x)$ fails to recover $g$ only when $\ch(\F)=2$).

We denote by $\partial_c f$ the formal derivative of a polynomial $f$ in direction $c$,
and denote by $\Nabla f$ the formal gradient of $f$, so that $\partial_c f(x) = c \cdot \Nabla f$.
Note that the formal derivative is well defined for polynomials over any field; by linearity, it suffices to define it for monomials: for $f=\prod_i x_i^{d_i}$, $\frac{\partial f}{\partial x_j} := d_j x_j^{d_j - 1} \prod_{i\neq j}x^{d_i}$, $\Nabla f := (\frac{\partial f}{\partial x_i} )_{i=1}^n$ and
$\partial_c f = c \cdot \Nabla f = \sum_{i=1}^n c_i \frac{\partial f}{\partial x_i}$.

\subsection{Proof outline}

We start by sketching the standard proof of $\GI_d(d)$.
The argument is roughly as follows. 
For $f$ of degree $d$, the $d$-th derivative $\Delta^d f$ is a degree-$d$ multilinear form, which is biased by assumption. Call that bias $\delta$. Using a partition-versus-analytic rank bound, such as Theorem~\ref{thm:SvR}, one obtains a partition-rank decomposition for $\Delta^df$ of length $\Ot_d(\log_{|\F|}(1/\d))$. 
By plugging in the diagonal $(x,\ldots,x)$ and dividing by $d!$, we obtain a decomposition of $f_d$ (the degree-$d$ component of $f$), meaning 
$\rk(f_d) \le \Ot_d(\log_{|\F|}(1/\d))$.
Writing $f_d = \sum_{i} \a_i\b_i$, a standard Fourier/pigeonhole-type argument gives a correlation for $f_d$ of at least $\delta^{\Ot_d(1)}$ with some linear combination of $\a_i$ and $\b_i$---which is of degree at most $d-1$---and thus a correlation for $f$ itself with such a lower-degree polynomial.

The proof of Theorem~\ref{thm: homogeneous GI 2d}, for homogeneous polynomials, follows this classical recipe, except it utilizes a new correlation lemma (Lemma~\ref{lemma:avg-cor}) to obtain a correlation with a linear combination of only the lower-degree factors from each product $\a_i\b_i$ in the decomposition, which is a polynomial of degree at most $\deg(f)/2$ ($<d$).

The difficulty in the non-homogeneous case of Theorem~\ref{thm:d+1-PGI-poly} 
is twofold. First, $\D^d f$ is no longer homogeneous of degree $d$, so a short decomposition of $\D^d f$ is not immediately available from its high bias. 
A significant part of this paper (Sections~\ref{sec:rk*} and~\ref{sec:derivatives}) is dedicated to the task of carefully obtaining a special decomposition for a general $f$ of degree $d+1$ which---while not being necessarily a homogeneous decomposition---can be used  to obtain a correlation with a polynomial of degree at most $\deg(f)-2$. We briefly sketch now how this is done.

In Section \ref{sec:rk*}, we introduce a new notion of rank for polynomials called $\rk^*$. The objective here was to find a notion of rank for which the following holds: whenever $\rk^{*}(f)$ is small, one can find a large correlation of $f$ with a polynomial of degree at most $\deg(f)-2$ (whereas for the usual notion of rank, one can only generally get $\deg(f)-1$). It turns out that the requirement needed is quite weak: for a decomposition $f=\sum_{i=1}^r \a_i\b_i$ ($1\le\deg(\a_i)\le\deg(\b_i)<\deg(f)$) to be a $\rk^*$-decomposition, we require the degree-$1$ polynomials $\a_i$ (in fact, only those with $\deg(\a_i\b_i)=\deg(f)$) to have linearly independent linear components. This requirement guarantees that the zero-locus of these polynomials is fairly large after an appropriate perturbation (Lemma~\ref{lemma:pert}), which is essential for us to obtain significant correlation.

We now explain how the $\rk^*$-decomposition of $f$ is obtained.
Write $g=f_{d+1}$ and $h=f_d$ for the homogeneous components of degree $d+1$ and $d$ respectively.
Proposition~\ref{prop:dd} provides a simple formula for $\D^d g$.
Proposition \ref{prop:derive-bias} in particular shows that decomposing $g$ by the standard polarization process goes smoothly, thanks to a chain rule for bias (Lemma~\ref{lemma:bias-chain}). The decomposition of $h$ is a more delicate issue, since the polarization $\polar{h}$ need not be biased. However, we prove that $\polar{h}$ is biased when adding an appropriate partial derivative $\partial_c\polar{g}$ (with respect to one of the $d$ vector variables); moreover, the sum $\polar{h} + \partial_c\polar{g}$ is a $d$-linear form, hence an application of Theorem~\ref{thm:SvR} gives a decomposition to this sum. To deal with the summand $\partial_c\polar{g}$ we added, we use a chain rule for $\rk^*$ (Proposition~\ref{prop:rk*-inh}) and the fact that polarization and formal differentiation commute (Lemma~\ref{lemma:Delta-Nabla}).

All of the above allows us to obtain a non-homogeneous decomposition for $f$, which is nevertheless useful for us since
it is a short $\rk^*$-decomposition.
Using the control we have over its linear factors, Proposition~\ref{prop:corr} allows us then to use this special decomposition to obtain significant correlation with a polynomial of degree $\deg(f)-2 \le d-1$, as needed.

We finally remark that we cannot utilize the arguments leading to the polynomial bound for $\GI_d(d)$, in Theorem~\ref{theorem: Milicevic multilinear variety},
in order to obtain a similar bound for degree $d+1$, 
since the latter does not help in providing a decomposition, which our arguments require.

\section{Correlation lemma, the homogeneous and the degree-$d$ cases}\label{sec: corr lemma, homogeneous GI}


In this section we prove a correlation lemma which will be important in the rest of the paper. As an almost immediate corollary, we will obtain Theorem~\ref{thm: homogeneous GI 2d}.

For a tuple of functions $\vec{A}=(A_1,\ldots,A_m)$, their joint zero set is $\Z(\vec{A}) = \{x \vvert \forall i \colon A_i(x) = 0\}$.
For an event $E$, we abbreviate $\bi(f \vvert E) = \Exp[\chi(f) \vvert E]$.

\begin{lemma}[average correlation lemma]\label{lemma:avg-cor}
    Let $f,A_1,\ldots,A_m \colon \F^n \to \F$ be any functions,
    and put $\vec{A}=(A_1,\ldots,A_m)$.
    Then
    \[\Exp_{c \in \F^m} \bi\Big(f - \sum_{i=1}^m c_iA_i\Big) 
    = \P(\Z(\vec{A})) \cdot \bi(f \vvert \Z(\vec{A})).\]
\end{lemma}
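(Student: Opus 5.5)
The plan is to expand the left-hand side, swap the order of averaging, and use orthogonality of the nontrivial character $\chi$.

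First, write out the bias and use that $\chi$ is an additive character:
\[
\Exp_{c \in \F^m} \bi\Big(f - \sum_{i=1}^m c_iA_i\Big)
= \Exp_{c \in \F^m}\Exp_{x \in \F^n} \chi(f(x)) \prod_{i=1}^m \chi(-c_iA_i(x)).
\]
Both averages are finite, so we may exchange them. For each fixed $x$, the average over $c \in \F^m$ factors as a product over $i$:
\[
\prod_{i=1}^m \Exp_{c_i \in \F}\chi(-c_iA_i(x)).
\]

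The key input is the orthogonality relation $\Exp_{t \in \F}\chi(ta) = \1_{a=0}$. For $a=0$ this is clear. For $a \neq 0$, the map $t \mapsto ta$ is a bijection of $\F$, so the sum equals $\sum_{s\in\F}\chi(s)$. This sum vanishes because $\chi$ is a nontrivial homomorphism: choosing $s_0$ with $\chi(s_0)\ne 1$ and shifting gives $\chi(s_0)S = S$, hence $S=0$.

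Applying orthogonality, the product becomes $\1_{\Z(\vec{A})}(x)$, so the whole expression equals $\Exp_{x}\chi(f(x))\1_{\Z(\vec A)}(x)$. By the definition of conditional expectation, this is $\P(\Z(\vec A))\cdot \bi(f \vvert \Z(\vec A))$.

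One convention needs a remark: if $\Z(\vec A)$ is empty, the right-hand side should be read as $0$, and the computation above gives $0$ on the left as well. There is no real obstacle here; the only point needing care is this orthogonality step, which relies on $\chi$ being nontrivial.
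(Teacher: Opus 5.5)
Your proof is correct and follows the paper's argument exactly: exchange the two averages, use orthogonality of the nontrivial character $\chi$ to collapse the $c$-average to $\1_{\Z(\vec{A})}(x)$, and read off the result as a conditional expectation. Your explicit justification of orthogonality over a general finite field, and your tracking of the sign in $\chi(-c_iA_i(x))$, are slightly more careful than the paper's write-up, but the route is the same.
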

For completeness, if $\Z(\vec{A})=\emptyset$ then the right hand side should be interpreted as $0$.

\begin{proof}
    By direct computation:
    \begin{align*}
        \Exp_{c \in \F^m} \bi\Big(f - \sum_{i=1}^m c_iA_i\Big)        
        &= \Exp_{x} \Exp_{c \in \F^m} \chi\Big(f(x) - \sum_{i=1}^m c_iA_i(x)\Big)\\
        &= \Exp_{x} \chi(f(x)) \Exp_{c \in \F^m} \chi\Big(\sum_{i=1}^m c_iA_i(x)\Big)\\
        &= \Exp_{x} \chi(f(x)) \1_{\Z(\vec{A})}(x)\\
        &= |\F|^{-n}\sum_{x \in \Z(\vec{A})} \chi(f(x)),
    \end{align*}
    where the third equality is the fundamental fact that $\Exp_c \chi(c\cdot a)$ is $1$ if $a=0$, and otherwise $0$ (equivalently, the orthogonality of additive characters).
    If $\Z(\vec{A})\neq\emptyset$, we therefore get
    \[\Exp_{c \in \F^m} \bi\Big(f - \sum_{i=1}^m c_iA_i\Big)
        = \P(\Z(\vec{A}))\frac{1}{|\Z(\vec{A})|}\sum_{x \in \Z(\vec{A})} \chi(f(x))\\
        = \P(\Z(\vec{A}))\bi(f \vvert \Z(\vec{A})). \qedhere\]
\end{proof}

We will also need
the following Theorem due to Warning (for a short proof see~\cite{Asg18}).
\begin{theorem}[Warning's second theorem~\cite{War35}]\label{theorem: warning}
    For all polynomials $f_1,\ldots,f_m \colon \F^n \to \F$ 
    over a finite field $\F$,
    if $\Z(f_1,\ldots,f_m) \neq \emptyset$ then
    $|\Z(f_1,\ldots,f_m)| \ge |\F|^{n-\sum_{i=1}^m \deg(f_i)}$.
\end{theorem}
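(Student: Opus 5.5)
The plan is to prove the statement by the polynomial method: encode $\Z(f_1,\ldots,f_m)$ as the nonzero set of a single polynomial of controlled degree, then apply a lower bound on the number of non-zeros of a low-degree polynomial that does not vanish identically on $\F^n$. Put $q=|\F|$ and $D=\sum_{i=1}^m \deg(f_i)$. If $D\ge n$ the claim is trivial, since $|\Z|\ge 1 \ge q^{n-D}$ whenever $\Z\neq\emptyset$. So assume $D<n$.

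\textbf{Step 1 (indicator polynomial).} Let
\[
P(x)=\prod_{i=1}^m \bigl(1-f_i(x)^{q-1}\bigr).
\]
Since $y^{q-1}=1$ for every $y\in\F^\times$, we have $P=\1_{\Z(f_1,\ldots,f_m)}$ as a function on $\F^n$, and $\deg P\le (q-1)D$. Replace $P$ by its reduced form $\tilde P$, obtained by lowering exponents via $x_j^q=x_j$. This leaves the function unchanged and does not raise the degree. Because $\Z\neq\emptyset$, $\tilde P$ is not the zero function, and the number of non-zeros of $\tilde P$ is exactly $|\Z|$.

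\textbf{Step 2 (Alon--F\"uredi-type bound).} We prove the following claim.

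\emph{Claim.} If $Q$ is a reduced polynomial (every variable degree at most $q-1$) of total degree $k\le (q-1)n$ that does not vanish identically on $\F^n$, then $Q$ has at least $\min \prod_j y_j$ non-zeros, where the minimum is over integers $1\le y_j\le q$ with $\sum_j y_j\ge qn-k$.

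For $k=(q-1)D$ with $D$ an integer, this minimum equals $q^{n-D}$. Indeed, by concavity of $\log$ the product is minimized by pushing coordinates to the extremes, so we take $D$ coordinates equal to $1$ and the remaining $n-D$ coordinates equal to $q$. Applying the claim to $\tilde P$ then gives the theorem.

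\textbf{Step 3 (proof of the claim, by induction on $n$).} For $n=1$, a nonzero univariate polynomial of degree $k\le q-1$ has at most $k$ roots, hence at least $q-k$ non-zeros. For the inductive step, expand
\[
Q=\sum_{j=0}^{e} x_n^j\, Q_j(x_1,\ldots,x_{n-1}), \qquad Q_e\neq 0,
\]
so that $\deg Q_e\le k-e$. For each $x'\in\F^{n-1}$ with $Q_e(x')\neq 0$, the univariate slice $Q(x',\cdot)$ has degree exactly $e$, so it has at least $q-e$ non-zeros. By induction, the number of such $x'$ is at least the minimum bound for $n-1$ variables with degree budget $k-e$. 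Multiplying, we get at least $(q-e)\cdot\min\prod_{j<n}y_j$ non-zeros, with $\sum_{j<n}y_j\ge q(n-1)-(k-e)$. Setting $y_n=q-e$ gives $\sum_j y_j\ge qn-k$, which is exactly the claim.

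\textbf{Main obstacle.} The step needing the most care is the bookkeeping in Step 3: the degree of $Q_e$ must drop by exactly $e$ so that the budgets add up. A second point is checking, in Step 2, that the optimization over integers $1\le y_j\le q$ with $\sum_j y_j \ge qn-(q-1)D$ indeed yields $q^{n-D}$. This uses the integrality of $D$, which is why we use the degree bound $(q-1)D$ rather than anything finer.
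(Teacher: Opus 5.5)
The paper gives no proof of this statement. It imports Warning's theorem as a black box, pointing to \cite{Asg18} for a short proof, and uses it only in Corollary~\ref{coro:corr}; there the $A_i$ are forms, so $0\in\Z(\vec{A})$ and nonemptiness is automatic.

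Your argument is a correct, self-contained proof by the polynomial method. The reduced indicator polynomial $\tilde P$ of $\Z(f_1,\ldots,f_m)$ has degree at most $(q-1)D$. The Alon--F\"uredi bound on the full grid $\F^n$, which on $\F^n$ is the classical minimum-distance bound for generalized Reed--Muller codes, then gives at least $q^{n-D}$ non-zeros. Compared with citing, this costs some bookkeeping but buys self-containment and a bit more information. It yields at least $(q-b)q^{n-a-1}$ solutions when $\deg\tilde P=a(q-1)+b$ with $0\le b<q-1$, and the same induction runs verbatim on any grid $A_1\times\cdots\times A_n$.

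Two spots deserve one more line.

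First, you apply the claim with $k=\deg\tilde P$. Inside the induction you apply it to $Q_e$ with $\deg Q_e\le k-e$. Both inequalities may be strict, so say explicitly that the minimum is nonincreasing in the degree budget, since the feasible set only grows. Alternatively, state the claim for $\deg Q\le k$ with arbitrary $k\ge 0$, the minimum being $1$ once the constraint is vacuous.

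Second, replace the concavity heuristic by an exchange argument. For $1<y_i\le y_j<q$, the move $(y_i,y_j)\mapsto(y_i-1,y_j+1)$ keeps the sum and strictly lowers the product. So a minimizer has at most one coordinate strictly between $1$ and $q$. The constraint $\sum_j y_j\ge q(n-D)+D$ then forces at most $D$ coordinates equal to $1$, or at most $D-1$ if such a middle coordinate exists. In either case the product is at least $q^{n-D}$.
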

We obtain the following correlation bound, which only makes an assumption on the zero set of $f$ (or on an ideal containing $f$).
Recall the notation $\cor_{<d}(f) = \max_{\deg(P)< d} \cor(f,P)$.

\begin{corollary}\label{coro:corr}
    If $\Z(\vec{A}) \sub \Z(f)$
    for some $\vec{A}=(A_1,\ldots,A_m) \in \Forms(\F)^m$
    with $1 \le \deg(A_i) < d$,
    then $\cor_{<d}(f) \ge |\F|^{-(d-1)m}$.
\end{corollary}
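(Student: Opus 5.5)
We will combine Lemma~\ref{lemma:avg-cor} with Theorem~\ref{theorem: warning}. Apply Lemma~\ref{lemma:avg-cor} to $f$ and $\vec{A}$:
\[\Exp_{c \in \F^m} \bi\Big(f - \sum_{i=1}^m c_iA_i\Big) = \P(\Z(\vec{A})) \cdot \bi(f \vvert \Z(\vec{A})).\]
The hypothesis $\Z(\vec{A}) \sub \Z(f)$ means $f$ vanishes identically on $\Z(\vec{A})$. Hence $\chi(f(x)) = \chi(0) = 1$ for every $x \in \Z(\vec{A})$, so $\bi(f \vvert \Z(\vec{A})) = 1$. The right-hand side is therefore exactly the density $\P(\Z(\vec{A})) = |\Z(\vec{A})|/|\F|^n$.

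Next we bound this density from below. Each $A_i$ is a form of positive degree, so $A_i(0)=0$, and thus $0 \in \Z(\vec{A})$ and the zero set is nonempty. This is the only place homogeneity is used, and it is exactly what Theorem~\ref{theorem: warning} requires. Warning's second theorem then gives $|\Z(\vec{A})| \ge |\F|^{n - \sum_i \deg(A_i)}$. Since $\deg(A_i) \le d-1$ for each $i$, we get $\sum_i \deg(A_i) \le (d-1)m$, and so $\P(\Z(\vec{A})) \ge |\F|^{-(d-1)m}$.

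Finally, we pass from the average to a single polynomial. The average over $c$ of the complex numbers $\bi(f - \sum_i c_iA_i)$ is a real number at least $|\F|^{-(d-1)m}$. By the triangle inequality, some $c \in \F^m$ satisfies $|\bi(f - \sum_i c_iA_i)| \ge |\F|^{-(d-1)m}$, that is, $\cor(f, P) \ge |\F|^{-(d-1)m}$ for $P = \sum_i c_iA_i$. Since each $A_i$ has degree below $d$, so does $P$, which gives $\cor_{<d}(f) \ge |\F|^{-(d-1)m}$.

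There is no real obstacle here. The only point needing care is the nonemptiness of $\Z(\vec{A})$ before invoking Warning's theorem, and this follows immediately from the $A_i$ being forms of positive degree.
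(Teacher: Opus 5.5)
Your proof is correct and follows the paper's argument essentially step for step. You apply Lemma~\ref{lemma:avg-cor}, use $\Z(\vec{A}) \sub \Z(f)$ to make the conditional bias equal to $1$, lower-bound $\P(\Z(\vec{A}))$ by Warning's second theorem (with $0 \in \Z(\vec{A})$ coming from homogeneity), and then pass from the average over $c$ to a single good $c$ via the triangle inequality.
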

\begin{proof}
    Put $D = \sum_{i=1}^m \deg(A_i)$. 
    Apply Lemma~\ref{lemma:avg-cor}
    to obtain
    \begin{align*}
    \Exp_{c \in \F^m} \bias\Big(f - \sum_{i=1}^m c_iA_i\Big) 
    &\ge \Big| \Exp_{c \in \F^m} \bi\Big(f - \sum_{i=1}^m c_iA_i\Big) \Big|\\
    &= \P(\Z(\vec{A})) \cdot |\bi(f \vvert \Z(\vec{A}))|
    = \P(\Z(\vec{A})) \ge |\F|^{-D},
    \end{align*}
    where the last inequality applies Warning's second theorem (Theorem~\ref{theorem: warning}), using the fact that $0 \in \Z(A_1,\ldots,A_m)$ by homogeneity.
    Thus, some $P=\sum_{i=1}^m c_iA_i \in \Poly_{d-1}$ 
    satisfies $\bias(f - P) \ge |\F|^{-D} \ge |\F|^{-(d-1)m}$.    
\end{proof}

We now prove Theorem \ref{thm: homogeneous GI 2d}.
\begin{proof}[Proof of Theorem \ref{thm: homogeneous GI 2d}] 
Let $f: \F^n \to \F$ be a homogeneous polynomial with $k:=\deg(f) < 2d$,
where $\F=\F_q$ with $\ch(\F_q)>k$,
and put $\bias(\D^d f) = \d$.
We assume $k \ge d \ge 2$ and $\d>0$, as otherwise there is nothing to prove.
First, we observe that $\bias(\polar{f}) \ge \d^{2^d}$,
recalling that $\polar{f}=\D^k f$ denotes polarization.
Indeed, using the monotonicity of Gowers norms, 
$\norm{g}_{U^k} \ge \norm{g}_{U^d}$ for $k\ge d$ and any $g \colon \F^n \to \CC$,
and using the fact that, by definition, 
$\bias(\D^t f) = \norm{e(f)}_{U^t}^{2^t}$ for any $t$,
we have that
\[\bias(\polar{f}) = \bias(\D^k f) = \norm{e(f)}^{2^k}_{U^k} 
\ge \norm{e(f)}^{2^k}_{U^d} = \bias(\D^d f)^{2^{k-d}} \ge \d^{2^d}.\]
By Theorem~\ref{thm:SvR} applied on the $k$-linear form $\polar{f}$,
we obtain a partition-rank decomposition
\[\polar{f} = \sum_{i=1}^r Q_i R_i\]
with $Q_i,R_i$ multilinear forms of degrees at most $k-1$, on disjoint sets of vector variables, 
and 
\[r \le \Ot_k(\log_q(1/\bias(\polar{f}))) = \Ot_d(\log_{q}(1/\d)).\]

Assume without loss of generality that $\deg(Q_i) \le \deg(R_i)$ for every $i$.
Since $\deg(Q_i)+\deg(R_i) = k$, we have $\deg(Q_i) \le k/2$.
Thus, we can put $\polar{f}$ in the ideal
$\polar{f} \in \langle Q_1,\ldots,Q_r \rangle$.
Plug in the diagonal: let $A_i(x) = Q_i(x,\ldots,x)$, 
and observe that $f(x) = \frac{1}{k!}\polar{f}(x,\ldots,x)$, 
using the homogeneity of $f$ and $\ch(\F)>k$.
It follows that $f \in \langle A_1,\ldots,A_r \rangle$ with each $A_i$ homogeneous of degree at most $k/2 < d$, by assumption.
We may assume $\deg(A_i) \ge 1$, as otherwise $A_i=0$ so we may discard it.
Since $\Z(\vec{A}) \sub \Z(f)$,
we are now done by Corollary~\ref{coro:corr}:
\[\cor_{<d}(f) \ge q^{-d r} \ge q^{-\Ot_d(\log_{q}(1/\d))} = \d^{\Ot_d(1)}. \qedhere\]
\end{proof}

\subsection{A remark on the case of degree-$d$ polynomials}\label{subsection: degree d}

In~\cite{Mil25}, Mili\'{c}evi\'{c}~ proved a ``weak structural result for biased multilinear forms'' (see Theorem~\ref{theorem: Milicevic multilinear variety} below).
In this section we show that it easily implies the degree-$d$ case of the Polynomial Gowers Inverse conjecture, using our correlation lemma, Lemma~\ref{lemma:avg-cor}.
For comparison, the degree-$d$ case was proved in~\cite{MR25} using an approximation result (Theorem~2 in that paper) which roughly states that the phase of any multilinear form of large bias is close in the $L^2$-norm to a linear combination of phases of lower-degree forms.

We first state the aforementioned weak structural result.
Note that if a multilinear form $T \colon (\F^n)^d \to \F$ is of degree $k$ then $T=T(x^{(i_1)},\ldots,x^{(i_k)})$ depends on some $k$ of the $d$ variable vectors $x^{(1)},\ldots,x^{(d)}$ 
(that is, $T \colon (\F^n)^I \to \F$ for some subset $I \sub [d]$ with $|I|=k$).

Call a subset $V \sub (\F^n)^d$ a \emph{multilinear variety} if $V=\Z(T_1,\ldots,T_m)$ is the zero set of a collection of multilinear forms $T_i$ of degree at most $d$.
With a slight abuse of notation, the \emph{codimension} of $V$ is the least number $m$ of multilinear forms needed.
The \emph{density} of $V$ is $|V|/|\F^{nd}| \in [0,1]$.

\begin{theorem}[\cite{Mil25}]\label{theorem: Milicevic multilinear variety}
    Any multilinear variety $V \subseteq (\F_p^n)^d$ 
    of density $\d$ 
    contains a multilinear variety of codimension at most $O_d(\log_{p}(1/\d) + 1)$.

\end{theorem}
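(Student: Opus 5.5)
The plan is an iterative \emph{codimension-reduction} argument. It is driven by the identity in Lemma~\ref{lemma:avg-cor} applied with $f=0$: for $V=\Z(T_1,\ldots,T_m)$,
\[\d=\P(V)=\Exp_{c\in\F_p^m}\bi\Big(\sum_{i=1}^m c_iT_i\Big)=p^{-m}\sum_{c\in\F_p^m}\bias\Big(\sum_i c_iT_i\Big),\]
where the last equality uses that biases of multilinear forms are nonnegative reals.

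We first dispose of the \emph{unbiased} case. If every nonzero combination $c\cdot\vec T$ has bias at most $p^{-m}$, then $\d\le 2p^{-m}$. Hence $m\le\log_p(1/\d)+1$, and $V$ itself is the required subvariety. Otherwise there is a nonzero $c$ such that $S=c\cdot\vec T$ has $\bias(S)$ large. We may change basis among the $T_i$ so that $S$ replaces one of them, say $T_m$, and then apply a partition-rank bound (Theorem~\ref{thm:SvR}) to get $S=\sum_{j=1}^{r}Q_jR_j$ with $r\lesssim\AR(S)$. The variety $V'=\Z(T_1,\ldots,T_{m-1},Q_1,\ldots,Q_r)$ is contained in $V$ and consists of multilinear forms of degree at most $d$. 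We replaced one form by $r$ forms, but the $Q_j$ have strictly lower degree (in fewer vector variables). This suggests an induction on the multiset of degrees of the defining forms: for instance, a lexicographic potential counting forms of degree $d$ first, then degree $d-1$, and so on. The iteration terminates once no combination of the current top-degree forms is biased.

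The bookkeeping must ensure that the total number of forms created is $O_d(\log_p(1/\d)+1)$. We want to charge each splitting step to a density loss: the new forms $Q_j$ should cost at most $O(\AR(S))$ in codimension, while the density of the current variety drops by at most a factor $p^{-O(\AR(S))}$. Then a telescoping sum of the quantities $\log_p(1/\text{density})$ bounds the total codimension. To make this work, we should choose $c$ to maximize $\bias(c\cdot\vec T)$ relative to $p^{-m}$ (a density-increment choice). We should also use the averaging identity conditioned on $\Z(\vec Q)$ to compare $\P(V')$ with $\P(V)$. Note that $\P(\Z(\vec Q))\ge p^{-\sum\deg}$ by a Warning-type bound (Theorem~\ref{theorem: warning}) or directly by multilinearity.

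The main obstacle is twofold. First, Theorem~\ref{thm:SvR} only gives $\PR=\Ot_d(\AR)$, and iterating it loses logarithmic factors, whereas the claim is a clean $O_d(\log_p(1/\d)+1)$. Second, the lower-degree forms $Q_j$ may themselves make the variety much sparser than $V$. To overcome this, I would avoid the full partition-rank decomposition and instead prove the statement directly by induction on $d$. For a biased $S$, slice along the last vector variable. The set of $x^{(1)},\ldots,x^{(d-1)}$ at which the linear form $S(\cdot,x^{(d)})$ vanishes identically is itself a $(d-1)$-multilinear variety of density equal to $\bias(S)$. Applying the induction hypothesis to it yields a low-codimension subvariety on which $S$ vanishes, with codimension $O_{d-1}(\AR(S)+1)$ and no logarithmic loss. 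Carrying this through the density-increment iteration, with the densities multiplying, is where I expect the real difficulty to lie.
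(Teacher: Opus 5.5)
The paper does not prove this statement. It imports it from \cite{Mil25}, and uses it precisely because iterating Theorem~\ref{thm:SvR} would lose the logarithm you point out. Your proposal therefore has to stand on its own, and it does not. It stops at the step that carries all the content, and the iteration as you set it up actually fails.

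Your stopping rule only gives $m\le\log_p(1/\d_{\mathrm{cur}})+1$, where $\d_{\mathrm{cur}}$ is the density of the \emph{current} variety. Nothing in the scheme keeps $\d_{\mathrm{cur}}$ comparable to the original $\d$. Take $d=2$ with $x$ as the last vector variable, and let $V=\Z(y_1x_1,\ldots,y_nx_1)=\{x_1=0\}\cup\{y=0\}$. This variety has density about $1/p$ and contains $\Z(x_1)$, which has codimension $1$.

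Every nonzero combination $(c\cdot y)x_1$ has bias $1/p>p^{-n}$. Slicing along $x$ gives the hyperplane $\{c\cdot y=0\}$. So each step trades a form $y_ix_1$ for a linear form in $y$, with $\AR(S)=1$ and a density loss of a factor $p$. After $n$ steps you are inside $\Z(y_1,\ldots,y_n)$, which has density $p^{-n}$ and hence codimension at least $n$. Your stopping bound is perfectly consistent with this outcome.

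So the proposed charging is circular. It bounds the final codimension by $\log_p(1/\d)$ plus the sum of the per-step costs, and that sum is exactly the quantity that needed bounding. Choosing $c$ to maximize bias does not help, since all these combinations have the same bias.

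What is missing is a potential that is bounded and increases at each step. The natural candidate is the relative density $\P(V\cap W)/\P(W)$ of $V$ inside a structured variety $W$ built from scratch, discarding the $T_i$ rather than keeping them. You would need an increment step roughly of this form: unless $W\subseteq V$, one can pass to some $W'\subseteq W$ of codimension larger by $k\ge 1$ while multiplying the relative density by $p^{\Omega_d(k)}$. Since relative density never exceeds $1$, the total codimension would then be $O_d(\log_p(1/\d)+1)$.

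In the example, this criterion is exactly what selects $\Z(x_1)$, whose relative density is $1$, over $\Z(c\cdot y)$, whose relative density is still about $1/p$. Your slicing cannot see this, because it examines one biased combination at a time rather than how $V$ sits inside the cut. Establishing such an increment is the substance of the theorem, and your proposal does not supply it.

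Two smaller points. First, biases of combinations of forms on different sets of vector variables need not be nonnegative reals: for $n=1$, $\bi(xy+x+y)=\chi(-1)/p$. The triangle inequality still gives $\d\le 2p^{-m}$ in your unbiased case. Second, both Theorem~\ref{thm:SvR} and the identity ``density of the slice $=\bias(S)$'' require $S$ to be a genuine multilinear form. Combinations must therefore be taken within a single support class.
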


We now give our short alternative proof to the Polynomial Gowers inverse conjecture in degree $d$, namely, the bound $\cor_{<d}(f) \ge \bias(\D^d f)^{O_d(1)}$ for every polynomial $f \in \Poly_d(\F_p)$ with $d<p$.

\begin{proof}[Proof of $\GI_d(d)$ from Theorem~\ref{theorem: Milicevic multilinear variety}]
    Let $f \colon \F^n \to \F$ with $\deg(f) \le d$ and $\bias(\D^d f) = \d$,
    where $\F=\F_p$ and $d<p$.
    We assume $\deg(f)=d \ge 2$ and $\d>0$, as otherwise there is nothing to prove.
    Recall that $\D^d f \colon (\F^n)^d \to \F$ is a $d$-linear form, 
    and let us write $\D^d f(x^{(1)},\ldots,x^{(d)}) = \sum_{i=1}^n x^{(d)}_i T_i(x^{(1)},\ldots,x^{(d-1)})$, 
    so that each $T_i\colon (\F^n)^{d-1} \to \F$ is a $(d-1)$-linear form.
    Put $\vec{T} = (T_1,\ldots,T_n)$,
    and consider the multilinear variety $V = \Z(\vec{T}) \sub (\F^n)^{d-1}$. 
    Note that the density of $V$ is $\P_{\vec{x} \in (\F^n)^{d-1}}(\vec{x} \in V) = \bias(\D^d f) = \d$;
    indeed, this is the standard observation 
    $\bias(\D^d f) 
    = \Exp_{\vec{x}=(x^{(1)},\ldots,x^{(d-1)})} \Exp_{x^{(d)}} \chi(x^{(d)} \cdot \vec{T}(\vec{x})) 
    = \Exp_{\vec{x}} \1_V(\vec{x})
    = \P_{\vec{x}}(\vec{x} \in V)$.
    By Theorem~\ref{theorem: Milicevic multilinear variety} with $d-1$, there are multilinear forms $\vec{S}=(S_1,\ldots,S_m)$, with each $S_i \colon (\F^n)^{d-1}\to\F$ of degrees at most $d-1$, 
    such that $\Z(\vec{S}) \sub \Z(\vec{T})$ 
    and 
    $m \le O_d(\log_{p}(1/\d)+1)$.
    We observe that for any nonzero $d$-linear form $T$, $\AR(T) \ge 2^{-d}$; this follows for example from Claim~3.3 in~\cite{Lov19}.
    Since $\D^d f \neq 0$, as otherwise $\deg(f) < d$,
    we have that $\log_{p}(1/\d) = \AR(\D^d f) \ge 2^{-d}$.
    Thus, $\log_{p}(1/\d)+1 \le (2^d+1)\log_{p}(1/\d)$,
    and therefore 
    \begin{equation}\label{eq:poly-many-eqs}
        m \le O_d(\log_{p}(1/\d)).
    \end{equation}

    Plugging in the diagonal, 
    let $A_i(x) = S_i(x,\ldots,x)$, 
    homogeneous with $\deg(A_i) \le \deg(S_i) \le d-1$.
    Put $\vec{A}=(A_1,\ldots,A_m)$.
    We have $f_d(x) = (1/d!)\D^d f(x,\ldots,x)$
    using $d<p$.
    Since $\Z(\vec{S}) \times \F^n \sub \Z(\vec{T}) \times \F^n \sub \Z(\D^d f)$,
    we have that $\Z(\vec{A}) \sub \Z(f_d)$.\footnote{Indeed, $\Z(\vec{A}) = \{x \setmid \vec{A}(x)=0\}
    = \{x \setmid \vec{S}(x,\ldots,x)=0\}
    \sub \{x \setmid \D^d f(x,\ldots,x,x)=0\}
    = \Z(f_d)$.}

    We may assume $\deg(A_i) \ge 1$, as otherwise $A_i=0$ so we may discard it.
    By Corollary~\ref{coro:corr}, 
    $\cor(f_d, P) \ge p^{-dm}$
    for some polynomial $P=\sum_{i=1}^m c_iA_i$ of degree at most $d-1$.
    Note that $\cor(f,\, P+f_{<d}) = \bias(f-(P+f_{<d})) = \bias(f_d - P)$. 
    Thus,
    \[\cor_{<d}(f) \ge 
    \bias(f_d - P) \ge p^{-dm}
    \ge p^{-O_d(\log_p(1/\d))} = \d^{O_d(1)}\]
    using~(\ref{eq:poly-many-eqs}) in the last inequality. This completes the proof.

\end{proof}

\section{$\rk^*$}\label{sec:rk*}

In this section we introduce our refined notion of rank, compare it to standard notions from the literature, and prove several properties that would be useful later.

Recall that for a form $f$, 
the rank of $f$ is the smallest number of reducible forms that sum to $f$.
A natural extension of rank to general polynomials (that is, not necessarily homogeneous) expresses $f$ as a sum of reducible polynomials of degree at most $\deg(f)$ (so the factors are of degree below $\deg(f)$).
Our new notion of rank does that, except it also adds a mild condition on the factors in the decomposition, or more specifically, on its factors of degree $1$.
Intuitively, one can think of this as a very weak ``quasirandomness'' condition on the decomposition.

Call a set of degree-$1$ polynomials \emph{affinely independent}
if their linear components 
are linearly independent.
For example, $x$ and $x+1$ are linearly independent but not affinely independent, since they have the same linear component, namely, $x$.

\begin{definition}[$\rk^*$]\label{def:rk*}
    For a non-constant polynomial $f$, $\rk^*(f)$ is the smallest $r$ such that:
    \begin{itemize}
        \item $f = \sum_{i=1}^r f_i$ for some reducible polynomials $f_i=\a_i\b_i$ with $\deg(f_i) \le \deg(f)$,\footnote{That is, $\deg(\a_i),\deg(\b_i) \ge 1$ and $\deg(\a_i)+\deg(\b_i) \le \deg(f)$. Note that $\deg(\a_i), \deg(\b_i) < \deg(f)$.} and
        \item $\{\a_i \vvert \deg(\a_i)=1, \deg(f_i) = \deg(f)\}$ are affinely independent, where $\deg(\a_i) \le \deg(\b_i)$.
    \end{itemize}
\end{definition}
If $f$ is a constant polynomial, we set $\rk^*(f)=0$.
By convention, $\rk^*(f) = \infty$ if no such decomposition exists.
This happens when $\deg(f)=1$, 
and can also happen when $\deg(f)=2$; for example, for any irreducible univariate quadratic.\footnote{Affine independence implies at most $n$ top-degree summands having a degree-$1$ factor. 
For a univariate quadratic, this means at most $n=1$ summands.}
On the other hand, $\deg(f)\ge 3$ implies $\rk^*(f) < \infty$; in fact, $\rk^*(f) \le n+1$.\footnote{Indeed, let $q$ be any reducible quadratic with affine part $q_{\le 1}=f_{\le 1}$ (say $q(x)=(x_1+1)(f_{\le 1}(x)-f_0x_1)$). 
Then $f-q$ has only monomials of degree at least $2$, so it has a $\rk^*$-decomposition with $\a_1=x_1,\ldots,\a_n=x_n$.
Since $\deg(q)<3\le\deg(f)$, this gives a $\rk^*$-decomposition for $f$ of length at most $n+1$.
(This bound cannot be improved to $n$ in general; e.g., any  irreducible univariate $f$, assuming $\F$ is not algebraically closed, has $\rk^*(f) \ge 2=n+1$.)}


Note that if $f$ is homogeneous then $\rk^*(f)=\rk(f)$.
Indeed, $\rk^*(f)\le\rk(f)$ since the factors may be assumed to be linearly independent forms (see Lemma~\ref{lemma:rk-lin-ind}), so the affine independence condition trivially holds;
conversely, $\rk(f)\le\rk^*(f)$ since for each summand $\a_i\b_i$, the lower-degree components of $\a_i$ and of $\b_i$ contribute only monomials of degree below $\deg(f)$ to the product, and thus may all be discarded.
Note too that $f$ of positive degree is irreducible if and only if $\rk^*(f)>1$, similarly to $\rk$.

The importance of $\rk^*(f)$ stems from the fact that it is closely related to correlation with polynomials of degree $\deg(f)-2$, whereas the usual $\rk(f)$ only gives information about correlation with degree $\deg(f)-1$.

As an example for $\rk^*$, consider a polynomial of the form $f = (A-B)\cdot\ell + B$, where $\deg(A)=\deg(B)>1$ and $\deg(\ell)=1$.
While $f$ has the decomposition $f = \ell A + (-\ell+1)B$, 
this is not a $\rk^*$-decomposition since its degree-$1$ factors $\ell$ and $-\ell+1$ are not affinely independent. 
In fact, $\rk^*(f)$ may be arbitrarily large.
Thus, it is not surprising that $f$ may have negligible correlation with polynomials of degree at most $\deg(f)-2$ (see Remark~\ref{remark:rk*-vs-rank} next).
This is in sharp contrast to polynomials with low $\rk^*$, which do have significant correlation with polynomials of degree at most $\deg(f)-2$ (see Proposition~\ref{prop:corr} below).

\begin{remark}\label{remark:rk*-vs-rank}
    Interestingly, $\rk^*(f)$ can be more closely related to $\bias(f)$, or rather to analytic rank $\AR(f)=-\log_{|\F|}(\bias(f))$, than other standard notions of rank in the literature.
    For concreteness, consider the notion used by Green and Tao in \cite{GT09}: $\rank(f)$ is the least number $k$ of polynomials $\a_1,\ldots,\a_k$ with $\deg(\a_i) < \deg(f)$ such that $f \equiv \Gamma(\a_1,\ldots,\a_k)$ for some function $\Gamma \colon \F^k \to \F$.
    Consider the polynomial $f(x,y) = (A-B)\ell + B$ with $\ell=\ell(y)$ of degree $1$, and $A=A(x)$, $B=B(x)$ random of constant degree $d\ge 2$ on $n$ variables.
        Clearly, $\rank(f) \le 3$.
        In contrast, we will next show (slightly informally) that $\rk^*(f)$ and $\AR(f)$ grow with $n$, the number of $x$ variables.
        We note that a random $n$-variate polynomial over $\F_q$ of degree $d$ has exponential small $q^{-\Omega_d(n)}$ correlation with lower-degree polynomials~\cite{BHL12,BGY20}.
        
        Consider a $\rk^*$-decomposition $f=\sum_{i=1}^{\rk^*(f)} \a_i\b_i$.
        By affine independence, setting to $0$ all degree-$1$ factors $\a_i$ coming from top-degree summands gives an affine subspace that is not empty.
        Let $U$ be the non-empty intersection of this affine subspace with the affine hyperplane $\ell=c$, for an appropriate $c \in \F$. 
        Note that $\codim(U) \le \rk^*(f)+1$. Observe that the restriction $f'$ of $f$ to $U$ has (in the sense of Green-Tao) $\rank(f') \le 2\rk^*(f)$, since $\deg(f')=d$ (with high probability) and it inherits a decomposition in terms of polynomials of degree at most $\deg(f)-2 = d-1$.
        By construction, $f'$ is a nonzero linear combination of $A$ and $B$ restricted to $U$; using our bound on $\codim(U)$ and the randomness of $A$ and $B$,
        $\rank(f')$ grows with $n$, and thus also $\rk^*(f)$.

        We will next see that $\cor(f,P) \le |\F|^{-\Omega(n)}$ for any $P \in \Poly_{\deg(f)-2}$. 
        Note that
        \[\cor(f,P) = |\Exp_y \Exp_x \chi(f(x,y)-P(x,y))|
        \le \max_y |\Exp_x \chi(f(x,y)-P(x,y))|
        = \max_y \cor(f_y,P_y)\]
        where $f_y(x)=f(x,y)$ and $P_y(x)=P(x,y)$.
        For \emph{every} fixed $y$, we have by construction that $f_y$ is a nontrivial linear combination of two random polynomials of degree $d$, 
        and thus is a random polynomial of degree $d$ as well; therefore, since $\deg(P_y) \le \deg(P) \le d-1$, we have that
        $\cor(f_y,P_y) \le |\F|^{-\Omega(n)}$. By the above, the same follows for $\cor(f,P)$.\footnote{An alternative proof that $\rk^*(f)$ grows with $n$ follows immediately by combining this correlation upper bound together with Proposition~\ref{prop:corr} below.}
        
        Finally, note that this correlation upper bound in particular applies to $\bias(f)=\cor(f,0)$,
        so $\AR(f) \ge \Omega(n)$, similarly to $\rk^*(f)$.

\end{remark}

\subsection{Properties of $\rk^*$}

In general, $\rk^*$ is \emph{not} subadditive, unlike other notions of rank. 
That is, the sum of two $\rk^*$-decompositions need not be another $\rk^*$-decomposition, for the simple reason that there could be linear dependencies between the linear forms in the degree-$1$ factors of the two decompositions.
For example, the $\rk^*$ of $(x+1)R$ and of $xS$, where $\deg(R),\deg(S) \ge 2$, is $1$, but the $\rk^*$ of their sum $(x+1)R + xS$ need not be at most $2$; in fact, it can be unbounded (recall Remark~\ref{remark:rk*-vs-rank}).

That being said, $\rk^*$ is subadditive if the two polynomials are of different degrees.

\begin{observation}[subadditivity]\label{obs:subadd}
    $\rk^*(g+h) \le \rk^*(g) + \rk^*(h)$ for positive $\deg(h) \neq \deg(g)$.
\end{observation}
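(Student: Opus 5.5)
Without loss of generality assume $\deg(g) > \deg(h) \ge 1$; then $\deg(g+h) = \deg(g)$. If either rank is infinite there is nothing to prove, so we take optimal $\rk^*$-decompositions $g = \sum_{i=1}^{r} \a_i\b_i$ and $h = \sum_{j=1}^{s} \a'_j\b'_j$, with $r=\rk^*(g)$ and $s=\rk^*(h)$. The plan is to show that the concatenation $g+h = \sum_i \a_i\b_i + \sum_j \a'_j\b'_j$ is itself a $\rk^*$-decomposition of $g+h$, which gives $\rk^*(g+h)\le r+s$.

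There are two conditions in Definition~\ref{def:rk*} to verify. The degree condition is immediate: every summand of the $g$-part has degree at most $\deg(g)=\deg(g+h)$, and every summand of the $h$-part has degree at most $\deg(h)<\deg(g+h)$. Each summand is still a product of two factors of positive degree, so reducibility is preserved.

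The affine-independence condition only involves the degree-$1$ factors $\a$ coming from summands of degree exactly $\deg(g+h)$. Every summand of the $h$-part has degree at most $\deg(h) < \deg(g+h)$, so none of the $h$-factors enter this condition. The relevant set is therefore exactly $\{\a_i \vvert \deg(\a_i)=1,\ \deg(\a_i\b_i)=\deg(g)\}$, which is affinely independent because the $g$-decomposition is a $\rk^*$-decomposition. This observation is the whole point: the failure of subadditivity in general comes only from dependencies among top-degree linear factors, and here they all come from one decomposition.

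Two edge cases need checking. First, if $g$ is constant then $\deg(g)=0$, which the hypothesis excludes since both degrees are positive; in any case $\rk^*(\text{const})=0$ is consistent with the bound. Second, $g+h$ is non-constant because its degree is $\deg(g)\ge 2$. So there is no real obstacle; the only step requiring care is noticing that the $h$-summands never reach top degree, and hence impose no affine-independence constraint.
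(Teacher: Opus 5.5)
Your proof is correct and follows the paper's argument: you concatenate optimal $\rk^*$-decompositions of $g$ and $h$. Every $h$-summand has degree at most $\deg(h) < \deg(g) = \deg(g+h)$, so only the top-degree $g$-summands are subject to the affine-independence condition, and that condition holds by the choice of the decomposition of $g$.
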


A basic fact about rank is that it is \emph{not} inherited by partial derivatives: 
even if a form has low rank, a partial derivative might not.
Consider, say, reducible forms; for example, we have $\rk(x \cdot S) \le 1$ for any form $S=S(y)$, yet the partial derivative with respect to $x$ is $S(y)$, which may have arbitrarily high rank.

That being said, we show that, perhaps surprisingly,
\emph{adding} to a polynomial a partial derivative does not increase its $\rk^*$, up to a polynomial of lower degree.\footnote{Alternatively, one could omit the lower degree polynomial $\g$ from Lemma~\ref{claim:derive-rk*}, and from Proposition~\ref{prop:rk*-inh} below, by redefining $\rk^*(f)$ to ignore the components of $f$ of degree at most $\deg(f)-2$.}

\begin{lemma}[derivation invariance]\label{claim:derive-rk*}
    For any 
    $g \in \Forms_k$ with $k \ge 2$, and any direction vector $c$,
    \[\rk^*(g+\partial_c g + \g) \le \rk(g)\]
    for some $\g \in \Poly_{k-2}$.
\end{lemma}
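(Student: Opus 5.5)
Take a minimal rank decomposition $g=\sum_{i=1}^r \a_i\b_i$ with $r=\rk(g)$, where $\a_i,\b_i$ are forms, $1\le\deg(\a_i)\le\deg(\b_i)$ and $\deg(\a_i)+\deg(\b_i)=k$. By Lemma~\ref{lemma:rk-lin-ind} we may assume the degree-$1$ factors $\a_i$ are linearly independent linear forms. The idea is that $g+\partial_c g$ agrees with $g(x+c)$ up to terms of degree at most $k-2$, and translation preserves the product structure. By Taylor expansion (valid formally over any field, or via $\partial_c$ being a derivation),
\[g(x+c) = g(x) + \partial_c g(x) + \g'(x), \qquad \deg(\g') \le k-2 .\]
Similarly $\a_i(x+c)\b_i(x+c) = \big(\a_i+\partial_c\a_i+\cdots\big)\big(\b_i+\partial_c\b_i+\cdots\big)$. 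Hence
\[g+\partial_c g = \sum_{i=1}^r \a_i(x+c)\,\b_i(x+c) \;-\; \g',\]
and it suffices to take $\g=\g'$. I would rather avoid full Taylor expansion, in case $k\ge\ch(\F)$ makes divided powers awkward. So I will directly use the Leibniz rule $\partial_c(\a\b)=\a\,\partial_c\b+\b\,\partial_c\a$ together with $(\a+\partial_c\a)(\b+\partial_c\b) = \a\b+\partial_c(\a\b)+\partial_c\a\,\partial_c\b$. The last term has degree $k-2$. Set $\a_i' = \a_i+\partial_c\a_i$ and $\b_i'=\b_i+\partial_c\b_i$. Then
\[g+\partial_c g = \sum_i \a_i'\b_i' - \sum_i \partial_c\a_i\,\partial_c\b_i,\]
and $\g := -\sum_i \partial_c\a_i\partial_c\b_i \in \Poly_{k-2}$ (a summand is $0$ when $\partial_c$ kills a factor). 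This works over any characteristic.

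It remains to check that $g+\partial_c g+\g=\sum_i \a_i'\b_i'$ is a valid $\rk^*$-decomposition of length $r$. First, $\deg(\a_i')=\deg(\a_i)\ge1$ and $\deg(\b_i')=\deg(\b_i)\ge 1$, since adding lower-degree terms does not change the top component. The sum of these degrees is $k$. Also $\deg(g+\partial_c g+\g)=k$, because its top component is $g\neq 0$; if $g=0$ then $r=0$ and the claim is trivial. The ordering $\deg(\a_i')\le\deg(\b_i')$ is preserved. Second, the degree-$1$ factors $\a_i'=\a_i+c\cdot\Nabla\a_i$ are affine polynomials whose linear components are the original $\a_i$. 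These are linearly independent by our choice of decomposition, so the affine independence condition holds. One subtlety is the case $\deg(\a_i)=\deg(\b_i)=1$ with $k=2$: then either factor could be designated $\a_i$. Choosing the designated one from the independent set handles this, since the definition only asks that the chosen smaller-degree factors be affinely independent. Therefore $\rk^*(g+\partial_c g+\g)\le r=\rk(g)$.

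The main obstacle is ensuring that the degree-$1$ factors in a minimal-length rank decomposition can be taken linearly independent without increasing the length. I plan to invoke Lemma~\ref{lemma:rk-lin-ind} for this. If it were unavailable, the argument would be: given a linear dependency $\a_j=\sum_{i\ne j}\lambda_i\a_i$, rewrite $\a_j\b_j=\sum_{i\neq j}\a_i(\lambda_i\b_j)$ and merge each term into the existing summand $\a_i\b_i$. This merging requires $\deg(\b_i)=\deg(\b_j)=k-1$, which holds for degree-$1$ $\a$'s, and it strictly shortens the decomposition, contradicting minimality. Everything else is routine degree bookkeeping.
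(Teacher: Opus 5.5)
Your proof is correct and is essentially the paper's argument: the same identity $g+\partial_c g=\sum_i\a_i'\b_i'-\sum_i\partial_c\a_i\,\partial_c\b_i$ with $\a_i'=\a_i+\partial_c\a_i$ and $\b_i'=\b_i+\partial_c\b_i$, together with Lemma~\ref{lemma:rk-lin-ind} to get affine independence of the degree-$1$ factors. One sign slip needs fixing: to get $g+\partial_c g+\g=\sum_i\a_i'\b_i'$ you must take $\g:=+\sum_i\partial_c\a_i\,\partial_c\b_i$, not its negative (your Taylor-expansion version, with $\g=\g'$, already has the sign right).
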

For example, for the univariate $g(x)=x^2 \in \Forms_2$,
we have $\partial_c g=2cx$,
and it is easy to see that there is a constant $\g \in \F$ such that $g+\partial_c g+\g$ is reducible (is of $\rk^*$ $1$); 
indeed, take $\g=c^2$.
Lemma~\ref{claim:derive-rk*} can be thought of as a generalization of such identities.


Combining the above two properties, we show that $\rk^*$ satisfies a certain approximate ``chain rule'', 
which would complement a chain rule for bias that we prove later in Section \ref{sec:main-pf}.

\begin{proposition}[chain rule for $\rk^*$]\label{prop:rk*-inh}
    For any $g \in \Forms_k$ with $k \ge 2$ and $h \in \Poly_{<k}$,
    and any direction vector $c$,
    \[\rk^*(g+h+\g) \le \rk(g) + \rk^*(h-\partial_c g) \]
    for some $\g \in \Poly_{k-2}$. 
\end{proposition}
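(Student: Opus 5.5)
The plan is to write
\[ g+h+\g = (g+\partial_c g+\g) + (h-\partial_c g), \]
where $\g \in \Poly_{k-2}$ is the polynomial supplied by Lemma~\ref{claim:derive-rk*} (derivation invariance). By that lemma, $\rk^*(g+\partial_c g+\g)\le \rk(g)$. The proposition would then follow from Observation~\ref{obs:subadd} (subadditivity), provided the two summands have different positive degrees.

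The first summand has degree exactly $k$, since its top component is $g$ (assume $g\neq 0$). The second summand $h-\partial_c g$ lies in $\Poly_{<k}$, because $\deg(\partial_c g)\le k-1$ and $\deg(h)<k$. So the degrees differ and subadditivity gives the bound directly. It remains to handle the degenerate cases.

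If $g=0$, then $\rk(g)=0$ and $\partial_c g=0$. Taking $\g=0$, the claim reads $\rk^*(h)\le\rk^*(h)$.

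If $h-\partial_c g$ is a constant $a$, its $\rk^*$ is $0$. In this case I would absorb $a$ into $\g$: replacing $\g$ by $\g+a$ (still in $\Poly_{k-2}$, since $k\ge2$) gives $g+h+(\g+a)$... wait, more precisely I choose the final lower-degree polynomial to be $\g' := \g - a$... Rather, I note that $g+h+\g = (g+\partial_c g+\g)+a$. The decomposition witnessing $\rk^*(g+\partial_c g+\g)$ cannot simply be reused after adding $a$. Instead, I apply Lemma~\ref{claim:derive-rk*} and choose the proposition's polynomial as $\g$ itself, which makes $g+h+\g - a = g+\partial_c g+\g$. So I set the proposition's $\g$ to be $\g_{\text{lemma}} - a$. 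This is legitimate because constants lie in $\Poly_{k-2}$. Then $g+h+(\g_{\text{lemma}}-a) = g+\partial_c g + \g_{\text{lemma}}$, which has $\rk^*\le\rk(g)$.

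Finally, if $\rk^*(h-\partial_c g)=\infty$, the bound is trivial.

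The one point that needs care is checking that the union of the two $\rk^*$-decompositions is again a $\rk^*$-decomposition of the sum, i.e.\ the content of Observation~\ref{obs:subadd}. The summands coming from $h-\partial_c g$ have degree below $k=\deg(g+h+\g)$, so they are not top-degree summands of the combined decomposition. Hence the affine-independence condition only involves the degree-$1$ factors from the top-degree summands of the first decomposition, and these are affinely independent by assumption. All summands also have degree at most $k$, as required. So the main obstacle is already absorbed in Lemma~\ref{claim:derive-rk*}, and the proposition is a short combination.
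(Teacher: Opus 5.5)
Your proposal is correct and is essentially the paper's proof: apply Lemma~\ref{claim:derive-rk*}, split $g+h+\gamma=(g+\partial_c g+\gamma)+(h-\partial_c g)$, invoke Observation~\ref{obs:subadd} when $h-\partial_c g$ is non-constant, and when $h-\partial_c g=a$ is constant replace $\gamma$ by $\gamma-a$. Your constant-case paragraph is muddled in its wording but ends on exactly the paper's choice, and your separate handling of $g=0$ (where the first summand no longer has degree $k$) is a small point the paper leaves implicit.
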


We now prove the above properties of $\rk^*$.
We start with subadditivity.

\begin{proof}[Proof of Observation~\ref{obs:subadd}]
    Put $r_1=\rk^*(g)$, $r_2 = \rk^*(h)$,
    and assume both are finite as otherwise there is nothing to prove.
    Let $g=\sum_{i=1}^{r_1} g_i$ and $h=\sum_{i=1}^{r_2} h_i$, with $g_i$ and $h_i$ reducible polynomials with $\deg(g_i) \le \deg(g)$ and $\deg(h_i) \le \deg(h)$, be decompositions witnessing $\rk^*(g)$ and $\rk^*(h)$, respectively (recall that $g$ and $h$ are not constant).
    We have $g+h = \sum_{i=1}^{r_1} g_i + \sum_{i=1}^{r_2} h_i$.
    Assume without loss of generality that $\deg(h) < \deg(g)$.
    Since $\deg(h_i) \le \deg(h)$, 
    observe that the affine independence condition for $\rk^*(g+h)$ applies only for the $g_i$ (of degree $\deg(g)$ with a degree-$1$ factor), and is therefore satisfied by the definition of $\rk^*(g)$.
    Thus, $\rk^*(g+h) \le r_1 + r_2 = \rk^*(g) + \rk^*(h)$, as needed.
\end{proof}

We will use the following easy observation.
\begin{lemma}\label{lemma:rk-lin-ind}
    For every $g \in \Forms$ and a shortest $\rk$-decomposition $g = \sum_i \a_i\b_i$, 
    the forms $(\a_i)_i$ (and $(\b_i)_i$) are linearly independent. 
\end{lemma}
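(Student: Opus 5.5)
We argue by contradiction, using minimality of the decomposition. Let $g=\sum_{i=1}^r \a_i\b_i$ be a shortest $\rk$-decomposition, with $r=\rk(g)$, where the $\a_i,\b_i$ are forms of positive degree and $\deg(\a_i)+\deg(\b_i)=\deg(g)$. We may assume this degree condition, since the other homogeneous components of each product cancel in the sum and can be discarded. Suppose the $\a_i$ are linearly dependent. Then some nontrivial relation $\sum_i \lambda_i \a_i = 0$ holds, and we pick an index $j$ with $\lambda_j\neq 0$. To produce a shorter decomposition we must use this relation to eliminate the summand $\a_j\b_j$.

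The one subtlety is that the $\a_i$ may have different degrees. A linear relation among forms splits by homogeneity into relations among the $\a_i$ of each fixed degree. So we may assume the relation involves only indices $i$ with $\deg(\a_i)=e:=\deg(\a_j)$, and hence $\deg(\b_i)=\deg(g)-e=\deg(\b_j)$ for all of them. We then write $\a_j=-\lambda_j^{-1}\sum_{i\neq j}\lambda_i\a_i$, the sum running over indices of degree $e$, and substitute:
\[ g=\sum_{i\neq j}\a_i\b_i + \a_j\b_j = \sum_{i\neq j}\a_i\Big(\b_i - \lambda_j^{-1}\lambda_i\b_j\Big),\]
where $\lambda_i=0$ outside the degree-$e$ indices.

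Each new factor $\b_i-\lambda_j^{-1}\lambda_i\b_j$ is a form of degree $\deg(g)-e$. This is clear when $\lambda_i=0$, and otherwise $\deg(\b_i)=\deg(\b_j)$. If such a factor happens to vanish, that summand is simply $0$ and we drop it, which only shortens the decomposition further. Either way we obtain a rank decomposition of length at most $r-1$, contradicting minimality. The same argument with the roles of $\a$ and $\b$ exchanged shows that the $\b_i$ are linearly independent.

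The only point requiring care is the homogeneity bookkeeping: restricting to a single degree class is what keeps the new factors homogeneous of the correct degree. There is no real obstacle beyond that.
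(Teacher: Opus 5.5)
Your proof is correct and is essentially the paper's argument: you use homogeneity to restrict the linear relation to a single degree class, solve for one $\a_j$, and absorb $\a_j\b_j$ into the remaining summands to get a decomposition of length at most $r-1$. One small slip is that for indices with $\lambda_i=0$ the factor $\b_i$ has degree $\deg(g)-\deg(\a_i)$, which need not equal $\deg(g)-e$, but this is harmless because all you need is that each new factor is a form of degree $\deg(g)-\deg(\a_i)$, and that holds.
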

\begin{proof}
    Put $r=\rk(g)$, 
    and let $g = \sum_{i=1}^r \a_i\b_i$ be a $\rk$-decomposition.
    Suppose for contradiction that $\a_r = \sum_{i=1}^{r-1} c_i\a_i$ with $c_i$ scalars. By homogeneity, we may assume that $\deg(\a_i) = \deg(\a_r)$ whenever $c_i \neq 0$.
    Observe that $g$ has the decomposition $g = \sum_{i=1}^{r-1} \a_i(\b_i + c_i\b_r)$.
    Each summand is a reducible form, since $\a_i$ is homogeneous as well as $\b_i + c_i\b_r$; indeed, $\deg(\b_i)=\deg(g)-\deg(\a_i) = \deg(g)-\deg(\a_r) = \deg(\b_r)$ whenever $c_i \neq 0$. Thus, $r = \rk(g) \le r-1$, a contradiction.
\end{proof}

Next we prove the invariance of $\rk^*$ with respect to adding a derivative.

\begin{proof}[Proof of Lemma~\ref{claim:derive-rk*}]
    Put $r=\rk(g)$ 
    and write $g = \sum_{i=1}^r \a_i\b_i$ with forms $\a_i,\b_i$ 
    satisfying $1\le\deg(\a_i) \le \deg(\b_i) < k$ and $\deg(\a_i\b_i)=k$.
    
    By the product rule, 
    $\partial g_c = \sum_{i=1}^r \a_i(\partial_c\b_i) + (\partial_c \a_i)\b_i$.
    Observe that we 
    have the following fortunate identity:
    \[g+\partial_c g 
    = \sum_{i=1}^r (\a_i+\partial_c \a_i)(\b_i+\partial_c \b_i) 
    - \sum_{i=1}^r (\partial_c \a_i)(\partial_c \b_i).\]
    Put $\a_i' = \a_i+\partial_c \a_i$ and $\b_i' = \b_i+\partial_c \b_i$, 
    and note that $\deg(\a_i')=\deg(\a_i) \le \deg(\b_i)=\deg(\b_i')$.
    Thus, the linear components of those factors $\a'_i$ with $\deg(\a'_i)=1$
    are precisely the $\a_i$ with $\deg(\a_i)=1$,
    which by Lemma~\ref{lemma:rk-lin-ind} are linearly independent.
    As for the remaining terms, note that $\deg((\partial_c \a_i)(\partial_c \b_i)) \le \deg(\a_i\b_i)-2 = k-2$,
    so their sum $\g$ has $\deg(\g) \le k-2$.
    Thus, we have a (non-homogeneous) $\rk^*$-decomposition of $g+\partial_c g +\g$, and thus $\rk^*(g+\partial_c g+\g) \le r = \rk(g)$, as needed.
\end{proof}

We are now ready to prove our chain rule for $\rk^*$.
\begin{proof}[Proof of Proposition~\ref{prop:rk*-inh}]
    Apply Lemma~\ref{claim:derive-rk*} on $g \in \Forms_k$ and $c$ to obtain $\g \in \Poly_{k-2}$ with
    $\rk^*(g+\partial_c g + \g) \le \rk(g)$.
    If $h-\partial_c g$ is a constant $a$, then we are done since $\g-a \in \Poly_{k-2}$ and
    \[\rk^*(g+h+(\g-a)) = \rk^*(g+\partial_c g+\g) \le \rk(g) = \rk(g)+\rk^*(h-\partial_c g).\]
    Otherwise, 
    \begin{align*}
        \rk^*(g+h+\g) &= \rk^*\big( (g+\partial_c g+\g) + (h-\partial_c g) \big)\\
        &\le \rk^*(g+\partial_c g+\g) + \rk^*(h-\partial_c g) 
        \le \rk(g) + \rk^*(h-\partial_c g)
    \end{align*}
    where the first inequality uses the subadditivity of $\rk^*$ in Observation~\ref{obs:subadd}, as the two polynomials have different positive degrees: $k$ and at most $k-1$.
%
\end{proof}

\subsection{Lower-degree correlation}\label{sec:corr}

Recall that standard correlation results show that if a polynomial $f$ has a short decomposition in terms of lower degree polynomials, then it has a significant correlation with a polynomial of degree at most $\deg(f)-1$.
In this section we show that if a polynomial $f$ has a short $\rk^*$-decomposition, 
then it has a significant correlation with a polynomial of degree at most $\deg(f)-2$.\footnote{Assuming $\deg(f)\ge 3$. If $\deg(f)=2$ we only get correlation with degree $\deg(f)-1$, naturally.}
This relies on our correlation lemma in Lemma~\ref{lemma:avg-cor}, but here we only assume small $\rk^*$.

\begin{proposition}[lower-degree correlation]\label{prop:corr}
    For any $f \in \Poly_k(\F_q)$ with $k \ge 3$,
    \[\cor_{<k-1}(f) \ge q^{-2\rk^*(f)} .\]
\end{proposition}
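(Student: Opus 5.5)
Take a shortest $\rk^*$-decomposition $f=\sum_{i=1}^r \a_i\b_i$ with $r=\rk^*(f)$; if $r=\infty$ there is nothing to prove. We may assume $\deg(f)=k$, since if $\deg(f)\le k-2$ then $\cor_{<k-1}(f)=1$. If $\deg(f)=k-1$, the definition of $\rk^*$ with respect to $\deg(f)$ gives factors of degree at most $k-2$, and the argument below only gets easier. Sort the summands into three groups:
\begin{itemize}
\item $I_1$: top-degree summands ($\deg(\a_i\b_i)=k$) with $\deg(\a_i)=1$;
\item $I_2$: top-degree summands with $2\le\deg(\a_i)\le\deg(\b_i)$, so $\deg(\a_i)\le k-2$;
\item $I_3$: summands with $\deg(\a_i\b_i)\le k-1$.
\end{itemize}
The goal is a family $\vec{A}$ of at most $2r$ polynomials, each of degree at most $k-2$, such that $f$ restricted to $\Z(\vec{A})$ is a constant (or at least has a large-bias restriction). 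Lemma~\ref{lemma:avg-cor} then gives some $c$ with $\bias(f-\sum c_iA_i)\ge \P(\Z(\vec{A}))\cdot|\bi(f\vvert \Z(\vec A))|$. We lower bound $\P(\Z(\vec A))$ by Warning's theorem (Theorem~\ref{theorem: warning}), which requires $\Z(\vec{A})\neq\emptyset$. This is exactly where affine independence, i.e.\ the perturbation idea, enters, and I expect it to be the main obstacle.

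\textbf{Construction.} For $i\in I_1$ the $\a_i$ are affine with linearly independent linear parts, so the system $\a_i=0$, $i\in I_1$, defines a nonempty affine subspace. For $i\in I_2$ put $A_i=\a_i$, of degree $\le k-2$. For $i\in I_3$ the smaller factor has degree $\le (k-1)/2\le k-2$ (as $k\ge3$); take it as an $A_i$. For $I_1$ we also include $\a_i$, of degree $1\le k-2$. On $\Z(\vec{A})$ every summand vanishes, so $f\equiv 0$ there and $|\bi(f\vvert\Z(\vec A))|=1$.

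\textbf{Nonemptiness.} The $I_2,I_3$ factors are not homogeneous, so $\Z(\vec{A})$ could be empty. To fix this I would perturb: replace each $A_i$ for $i\in I_2\cup I_3$ by $A_i-t_i$ for constants $t_i$, chosen so that the joint zero set is nonempty. Concretely, fix any point $x_0$ in the affine subspace $\{\a_i=0 : i\in I_1\}$ and set $t_i=A_i(x_0)$. On the new zero set, $f$ is no longer zero. However, $f=\sum_{I_1}\a_i\b_i+\sum_{I_2\cup I_3}(A_i)\cdot(\text{other factor})$, and on the set the first sum vanishes while $A_i=t_i$. So $f$ agrees there with $\sum t_i\cdot(\text{other factor})$, a polynomial of degree at most $k-1$, but not necessarily $\le k-2$. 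This is the delicate point. To handle it, I would add the cofactors of the $I_2$ terms (degree $\le k-2$ since $\deg\a_i\ge2$) and the larger $I_3$ factors (degree $\le k-2$ as well, since $\deg\ge1$ for the smaller factor and the total is $\le k-1$) to the family $\vec{A}$, with their own perturbation constants read off at $x_0$. Then on the joint zero set $f$ is constant and the set contains $x_0$. This uses at most $2$ polynomials per summand for $I_2\cup I_3$ and one per summand for $I_1$, hence $m\le 2r$.

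\textbf{Conclusion.} Lemma~\ref{lemma:avg-cor} applied to $f$ with the perturbed $\vec{A}$ gives $c$ with $\bias(f-\sum c_i(A_i-t_i))\ge \P(\Z)$. The correlating polynomial $P=\sum c_i(A_i-t_i)$ has degree $\le k-2$. Warning gives $\P(\Z)\ge q^{-\sum\deg A_i}$, and this is where the bookkeeping needs care. The degree sum from $I_2\cup I_3$ is at most $k$ per summand, which is too large, so Warning applied naively gives only $q^{-O(kr)}$ rather than $q^{-2r}$. To reach $q^{-2r}$ exactly, I would instead average over $c$ and use the structure: the expected bias counts points where each product $\a_i\b_i$ equals its value at $x_0$. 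Alternatively, apply Lemma~\ref{lemma:avg-cor} with $A_i$ being the products themselves, perturbed, after first using affine independence to restrict to the subspace. Equidistribution of a pair $(\a_i,\b_i)$ costs at most $q^{-2}$ per summand via a character-sum bound $\Exp_{c,c'}$. I expect this degree-accounting step, getting exponent $2$ per summand independent of $k$, to be the real crux.
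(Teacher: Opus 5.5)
\textbf{There is a genuine gap.} It is exactly the step you flag as the crux: the bound $\P(\Z(\vec A))\ge q^{-2r}$.

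Your family $\vec A$ is the right one, and it matches the paper's. It consists of the $\a_i$ for $i\in I_1$, and both factors shifted by constants for $i\in I_2\cup I_3$. That gives $m=|I_1|+2|I_2\cup I_3|\le 2r$ polynomials of degree in $[1,k-2]$, on whose common zero set $f$ is constant.

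The problem is how you pick the shift constants: as the values at an \emph{arbitrary} point $x_0$ of the affine subspace $L=\{x : \a_i(x)=0 \ \forall i\in I_1\}$. The fiber through such an $x_0$ can be tiny. Warning's theorem then only gives $\P(\Z(\vec A))\ge q^{-\sum_i\deg A_i}\ge q^{-kr}$. So as written you prove $\cor_{<k-1}(f)\ge q^{-k\,\rk^*(f)}$, not the stated bound.

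Neither proposed repair works. Taking the products $\a_i\b_i$ as the $A_i$ yields a correlating polynomial of degree up to $k$, not at most $k-2$. And the pairs $(\a_i,\b_i)$ are in general not equidistributed (e.g.\ $\b_i$ may be a function of $\a_i$, or different pairs may be dependent). So no character-sum bound gives a loss of $q^{-2}$ per summand.

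\textbf{The missing idea is to choose the constants by pigeonhole rather than from a fixed point.}
\begin{itemize}
\item By affine independence, the linear part of $(\a_i)_{i\in I_1}\colon\F^n\to\F^{|I_1|}$ is surjective, so $\P(L)=q^{-|I_1|}$ exactly.
\item Conditioned on $L$, the tuple $(\a_i,\b_i)_{i\in I_2\cup I_3}$ takes at most $q^{2|I_2\cup I_3|}$ values. So some value $(y_i,z_i)_{i\in I_2\cup I_3}$ is attained with conditional probability at least $q^{-2|I_2\cup I_3|}$.
\item Set $y_i=0$ for $i\in I_1$ and write
\[
f=\sum_{i}(\a_i-y_i)\b_i+\sum_{i\in I_2\cup I_3}y_i(\b_i-z_i)+\sum_{i\in I_2\cup I_3}y_iz_i .
\]
This gives $f-c\in\langle\vec A\rangle$ with $\P(\Z(\vec A))\ge q^{-|I_1|}\cdot q^{-2|I_2\cup I_3|}\ge q^{-2r}$.
\end{itemize}
No appeal to Warning is needed. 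Pinning the value of a polynomial by pigeonhole costs a factor $q^{-1}$ regardless of its degree, which is what makes the exponent independent of $k$.

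Equivalently, your $x_0$ works if it is chosen well. For uniform $x_0\in L$, the expected conditional density of its fiber is $\sum_v p_v^2\ge q^{-2|I_2\cup I_3|}$ by Cauchy--Schwarz.

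Lemma~\ref{lemma:avg-cor} then finishes exactly as you describe. This is the content of the paper's perturbation lemma, Lemma~\ref{lemma:pert}.
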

For completeness, if $\rk^*(f)=\infty$ then the right hand side should be interpreted as $0$.

In order to convert a short $\rk^*$-decomposition to significant correlation, via Lemma~\ref{lemma:avg-cor}, we use the following lemma which 
puts a shift of $f$ in an ideal generated by few polynomials of degree at most $\deg(f)-2$ that, moreover, frequently vanish together.

\begin{lemma}[perturbation lemma]\label{lemma:pert}
    For any $f \in \Poly_k(\F_q)$ with $k \ge 3$, 
    we have $f-c \in \langle \vec{A} \rangle$, where $\vec{A}=(A_1,\ldots,A_m)$ and $c \in \F_q$, such that:
    \begin{itemize}
        \item $1 \le \deg(A_i) \le k-2$,
        \item $\P(\Z(\vec{A})) \ge q^{-m}$, 
        \item and $m \le 2\rk^*(f)$.
    \end{itemize}
    
\end{lemma}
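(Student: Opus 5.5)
Fix a shortest $\rk^*$-decomposition $f=\sum_{i=1}^r \a_i\b_i$ with $r=\rk^*(f)$, $1\le\deg(\a_i)\le\deg(\b_i)$ and $\deg(\a_i\b_i)\le k$; assume $r<\infty$, since otherwise there is nothing to prove. Partition the indices into three classes. Let $I_1$ be the set of $i$ with $\deg(\a_i)=1$ and $\deg(\a_i\b_i)=k$; then $\deg(\b_i)=k-1$, and by definition the $\a_i$, $i\in I_1$, are affinely independent. Let $I_2$ be the set of $i$ with $\deg(\a_i)\ge 2$; then $2\le\deg(\a_i)\le\deg(\b_i)\le k-2$. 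Let $I_3$ be the set of $i$ with $\deg(\a_i)=1$ and $\deg(\a_i\b_i)\le k-1$; then $\deg(\b_i)\le k-2$.

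The terms in $I_2\cup I_3$ are handled directly. For $i\in I_2$ we put both $\a_i$ and $\b_i$ into $\vec A$; each has degree between $2$ and $k-2$, and $\a_i\b_i\in\langle\a_i\rangle$. For $i\in I_3$ we put only $\b_i$ into $\vec A$, provided $\deg(\b_i)\ge 1$, which holds. Then $\a_i\b_i\in\langle\b_i\rangle$, and $1\le\deg(\b_i)\le k-2$.

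The terms in $I_1$ carry the real difficulty, because $\b_i$ has degree $k-1$, which is too large, and $\a_i$ is affine. Here I would use affine independence to perturb. Since the linear parts $\ell_i$ of the $\a_i$ ($i\in I_1$) are linearly independent, the affine subspace $\{\a_i=0,\ i\in I_1\}$ is nonempty. We cannot put $\a_i$ itself into $\vec A$ without spoiling the bound $\deg(A_i)\ge1$? In fact $\deg(\a_i)=1\le k-2$ because $k\ge 3$, so we can put $\a_i$ into $\vec A$ directly, and then $\a_i\b_i\in\langle\a_i\rangle$. With this choice, $f\in\langle\vec A\rangle$ and $c=0$.

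The count is at most two generators per term, so $m\le 2r$, and every generator has degree between $1$ and $k-2$. It remains to show $\P(\Z(\vec A))\ge q^{-m}$. This is the main obstacle, because the generators are not homogeneous, so the common zero set might be empty; $x$ and $x+1$ are the model example.

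The plan is to perturb: replace each non-linear generator $A$ by $A-A(x_0)$ for a suitably chosen point $x_0$, so that $\Z(\vec A)$ contains $x_0$, and then apply Warning's second theorem (Theorem~\ref{theorem: warning}). The difficulty is that the total degree $D=\sum\deg(A_i)$ can be as large as $(k-2)m$, while we need the stronger bound $q^{-m}$. So Warning alone is insufficient, and instead of using Warning I would use the counting provided by Lemma~\ref{lemma:avg-cor} differently.

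Choose $x_0$ on the affine subspace $\{\a_i=0,\ i\in I_1\}$, and shift every other generator by its constant value at $x_0$. The shifted ideal then contains $f-c$ for an explicit constant $c$ obtained by expanding the products. This requires checking that $(\a_i-a)(\b_i-b)$ differs from $\a_i\b_i$ only by elements of the ideal and a constant, which holds since $\a_i\b_i=(\a_i-a)(\b_i-b)+b(\a_i-a)+a(\b_i-b)+ab$.

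For the density bound $\ge q^{-m}$, I would choose $x_0$ at random, so that the shifts are random. Averaging over $x_0$ (and, if necessary, over all shift vectors $s\in\F_q^m$), the sets $\{\vec A=s\}$ partition $\F_q^n$ into at most $q^m$ pieces. A random point lands in a piece of density at least $q^{-m}$ on average, and in fact $\Exp_{x_0}\P(\vec A=\vec A(x_0))\ge q^{-m}$ by Cauchy–Schwarz. Hence some $x_0$ works.

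The affine-independence condition is exactly what lets us also force $\a_i(x_0)=0$ for $i\in I_1$ while keeping this density. To arrange this, restrict to the nonempty affine subspace $\{\a_i=0,\ i\in I_1\}$, whose density is $q^{-|I_1|}$, and run the pigeonhole argument for the remaining generators inside it. This gives density $\ge q^{-m}$ in total.
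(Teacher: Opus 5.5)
Your argument is essentially the paper's: keep the affinely independent top-degree linear factors $\a_i$ ($i\in I_1$) unshifted, replace the factors of every other term by $\a_i-\a_i(x_0)$ and $\b_i-\b_i(x_0)$, and get density $q^{-|I_1|}\cdot q^{-(m-|I_1|)}$ from the equidistribution of the affine map $(\a_i)_{i\in I_1}$ together with pigeonhole inside its zero fibre (the Warning and Cauchy--Schwarz detours are not needed). One small slip: for $i\in I_3$ you must include both shifted factors $\a_i-a$ and $\b_i-b$ as generators, not just $\b_i$, since otherwise the leftover term $b\,\a_i$ is not in the ideal; this still costs at most two generators per term, so $m\le 2r$ is unaffected.
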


\begin{proof}
    Put $r=\rk^*(f)$ and $\F=\F_q$.
    Assume $r<\infty$, as otherwise we are done by taking $\vec{A}=(x_1,\ldots,x_n)$ and $c=f(0)$.
    Assume $f$ is non-constant, as otherwise we are done by taking $m=0$ and $c=f \in \F_q$.
    Write $f = \sum_{i=1}^r \a_i\b_i$ with 
    $1 \le \deg(\a_i) \le \deg(\b_i) \le k-1$ and $\deg(\a_i\b_i) \le k$.
    Assume without loss of generality that the polynomials $\a_i$ satisfying both $\deg(\a_i)=1$ and $\deg(\a_i\b_i)=k$ are precisely $\a_{t+1},\ldots,\a_r$ for some $t\le r$.
    Observe that for every $i \le t$ we necessarily have $\deg(\b_i) = \deg(\a_i\b_i)-\deg(\a_i) \le k-2$;
    indeed, we have that $\deg(\a_i\b_i) \le k-1$ or $\deg(\a_i) \ge 2$.
    
    Let $\vec{A}^1=(\a_1,\ldots,\a_t)$, $\vec{A}^2=(\a_{t+1},\ldots,\a_r)$, and $\vec{B}^1=(\b_1,\ldots,\b_t)$, 
    and $m=r+t$.
    We claim that there are $y^1,z^1 \in \F^t$ such that 
    the map $(\vec{A}^1,\vec{A}^2,\vec{B}^1) \colon \F^n \to \F^m$ satisfies
    \begin{equation}\label{eq:PP}
        \P\big((\vec{A}^1,\vec{A}^2,\vec{B}^1)=(y^1,0,z^1)\big)
        \ge q^{-m}.
    \end{equation}
    If $t=r$ (for example, when $\deg(f)<k$), that is, $\vec{A}^2$ is empty, then~(\ref{eq:PP}) follows by the pigeonhole principle, so henceforth assume otherwise.    
    By the definition of $\rk^*(f)$, the linear components of $\a_{t+1},\ldots,\a_r$ are linearly independent, which implies that the affine map $\vec{A}^2 \colon \F^n \to \F^{r-t}$ is perfectly equidistributed; 
    indeed, the associated linear map is surjective, so its fibers are all cosets of the kernel and thus of the same size.
    In particular, $\P(\vec{A}^2=0) = q^{-(r-t)}$ ($>0$).
    Since 
    \[\sum_{y,z \in \F^t} \P\big( (\vec{A}^1,\vec{B}^1)=(y,z) \vvert \vec{A}^2=0 \big) = 1,\]
    there exist $y^1,z^1 \in \F^t$ such that
    \[
        \P\big( (\vec{A}^1,\vec{B}^1)=(y^1,z^1) \vvert \vec{A}^2=0 \big) \ge q^{-2t}.
    \]
    This proves~(\ref{eq:PP}):
    \begin{align*}
        \P\big((\vec{A}^1,\vec{A}^2,\vec{B}^1)=(y^1,0,z^1)\big)
        &= \P(\vec{A}^2=0) \cdot \P\big((\vec{A}^1,\vec{B}^1)=(y^1,z^1) \vvert \vec{A}^2=0\big)\\
        &\ge q^{-(r-t)} q^{-2t} = q^{-m} .
    \end{align*}
    Now, put $y=(y^1,y^2) \in \F^r$ with $y^2=0 \in \F^{r-t}$.
    Decompose $f$ as follows (recall $y_i=0$ for $i > t$):
    \[f = \sum_{i=1}^r (\a_i-y_i)\b_i + \sum_{i=1}^t y_i(\b_i-z_i) + \sum_{i=1}^t y_iz_i .\]
    Let $A_i = \a_i-y_i$ for $i \le r$, let $A_{r+i}=\b_i-z_i$ for $i \le t$,
    and let $c=\sum_{i=1}^t y_iz_i$ ($\in \F$).  
    Then $f-c \in \langle A_1,...,A_m \rangle$ with $m=r+t \le 2r$. 
    Moreover, for $i \le t$ we have $\deg(A_{r+i}) = \deg(\b_i)$, 
    and $1 \le \deg(\b_i) \le k-2$, while for $i \le r$ we have $\deg(A_i) = \deg(\a_i)$, and $1 \le \deg(\a_i) \le \lfloor k/2 \rfloor \le k-2$ using our assumption $k \ge 3$. 
    %
    Observe that
    \[\P(\forall i \colon A_i = 0) = 
    \P\big((\vec{A}^1,\vec{A}^2,\vec{B}^1)=(y^1,y^2,z^1)\big) 
    \ge q^{-m},\]
    where the inequality is~(\ref{eq:PP}).
    This completes the proof.
\end{proof}

\begin{proof}[Proof of Proposition~\ref{prop:corr}]
    Apply Lemma~\ref{lemma:pert} on $f$ to obtain $m$ polynomials $\vec{A}=(A_1,...,A_m) \in \Poly_{k-2}^m(\F_q)$ and a scalar $c_0 \in \F_q$ such that 
    $f-c_0 \in \langle \vec{A} \rangle$,
    where $\P(\Z(\vec{A})) \ge q^{-m}$ and $m \le 2\rk^*(f)$.
    Now apply Lemma~\ref{lemma:avg-cor} on $f,A_1,\ldots,A_m$
    to deduce that 
    \begin{align*} 
        \Exp_{c \in \F^m} \Big|\bi\Big(f - \sum_{i=1}^m c_iA_i\Big) \Big|
        &\ge \Big|\Exp_{c \in \F^m} \bi\Big(f - \sum_{i=1}^m c_iA_i\Big) \Big|\\
        &= \P(\Z(\vec{A})) \cdot |\bi(f \vvert \Z(\vec{A}))|\\
        &\ge q^{-m} \cdot |\bi(c_0 \vvert \Z(\vec{A}))| \ge q^{-2\rk^*(f)}.
    \end{align*}
    Since $\sum_{i=1}^m c_iA_i \in \Poly_{k-2}$, 
    we deduce that $f$ is significantly correlated with a polynomial of degree at most $k-2$:
    \[\cor_{<k-1}(f) 
    \ge \max_{c \in \F^m} \cor\Big(f,\, \sum_{i=1}^m c_iA_i\Big)
    \ge q^{-2\rk^*(f)}. \qedhere\]
\end{proof}

\section{Discrete derivatives}\label{sec:derivatives}

For a multilinear form $T$, we denote
$\partial_c T(\v) = T(\v,c)$.
In other words, $\partial_c T$ is the directional derivative with respect to the (say) last  vector variable.
We denote $\Nabla T(\v) = \big( T(\v,e_i) \big)_i$,
so that $\Nabla T$ is the gradient with respect to the last vector variable and $\partial_c T = c \cdot \Nabla T$.

Our main result in this section is a particularly useful description of the order-$d$ discrete derivative of forms of degree $d+1$.

\begin{proposition}\label{prop:dd}
    For any $g \in \Forms_{d+1}(\F)$, where $\ch(\F) \neq 2$, and any $\vec{v} \in (\F^n)^d$,
    \[\D^d_{\vec{v}} g(x) = \polar{g}(\vec{v}, x')
    \quad\text{ where }
    x' = x+\frac12\sum_{t=1}^d v^{(t)}.\]
\end{proposition}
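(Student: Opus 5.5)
We plan to write $g$ through its polarization, $g(x)=\frac{1}{(d+1)!}\polar{g}(x,\ldots,x)$, and then expand the iterated difference multilinearly. (Since $\ch(\F)\neq 2$ only is assumed, we instead work with the formal symmetric multilinear expansion: it suffices to prove the identity for a single monomial of degree $d+1$, or more robustly to argue by the following structural route that avoids dividing by $(d+1)!$.)

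The approach is to take one further derivative. $\D^d_{\v} g(x)$ is a polynomial in $x$ of degree at most $1$, because $\deg g=d+1$. Its linear part is obtained by differentiating once more: $\D_{w}\D^d_{\v} g = \D^{d+1}_{(\v,w)} g = \polar{g}(\v,w)$, which is independent of $x$. Hence
\[ \D^d_{\v} g(x) = \polar{g}(\v,x) + \D^d_{\v} g(0). \]
Since $\polar{g}(\v,\cdot)$ is linear, the claim reduces to the constant-term identity
\[ \D^d_{\v} g(0) = \tfrac12\sum_{t=1}^d \polar{g}(\v,v^{(t)}). \]

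To establish this, we expand $\D^d_{\v} g(0)=\sum_{S\sub[d]}(-1)^{d-|S|}g(v_S)$, where $v_S=\sum_{t\in S}v^{(t)}$, and compare both sides as polynomials in $\v$. Each side is multilinear-plus-one: it is homogeneous of total degree $d+1$ and vanishes whenever some $v^{(t)}=0$. The left side vanishes then because the difference in direction $0$ is trivial, and the right side by multilinearity of $\polar{g}$. Decomposing $g$ into monomials, a monomial term of $g(v_S)$ that involves all $d$ vectors must have exactly one vector appearing with degree $2$ and the others with degree $1$. So both sides are sums over $t$ of pieces quadratic in $v^{(t)}$ and linear in the rest. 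It therefore suffices to check the identity with all $v^{(s)}$ for $s\neq t$ fixed, restricting to the part quadratic in $v^{(t)}$.

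A cleaner route for this last step is to use the symmetric difference trick. We compute $\D^d_{\v}g(x)$ at the point $x=-\frac12\sum_t v^{(t)}$, where the claim says it should vanish. The substitution $v^{(t)}\mapsto -v^{(t)}$ for all $t$ maps the set of evaluation points $\{x+v_S\}$ to its negation, and $S\mapsto S^c$ reverses sign factors by $(-1)^d$. Since $g(-y)=(-1)^{d+1}g(y)$, this gives $\D^d_{\v}g(x)=-\D^d_{\v}g(x)$ at that centered point: the points $x+v_S$ and $x+v_{S^c}$ are negatives of each other, and $(-1)^{d-|S|}$ versus $(-1)^{|S|}$ differ by $(-1)^d$. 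Hence the value is $0$ because $\ch\neq 2$. Combining this with the affine formula gives $\D^d_{\v}g(x)=\polar{g}(\v,x+\frac12\sum_t v^{(t)})$.

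The main obstacle is only to verify carefully this sign bookkeeping in the pairing $S\leftrightarrow S^c$; the rest is the degree argument.
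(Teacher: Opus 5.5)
Your proof is correct, and it takes a different route from the paper.

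\textbf{The paper's route.} It works monomial by monomial. It expands $\D^d$ via semi-surjections (Lemma~\ref{lemma:add-deriv}) and splits them into two kinds. The permutations give $\polar{g}(\v,x)$. The surjections onto $[d]$ with one doubled value $t$ contribute $\frac12\sum_t\polar{g}(\v,v^{(t)})$, which the paper shows via the $2$-to-$1$ map $\sigma\mapsto\sigma_t$.

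\textbf{Your route.} You argue structurally, in two steps.
First, $\D_w\D^d_{\v}g=\D^{d+1}_{(\v,w)}g=\polar{g}(\v,w)$ does not depend on the base point. Hence $F=\D^d_{\v}g$ satisfies $F(x)=F(x_0)+\polar{g}(\v,x-x_0)$ for every $x_0$.
Second, take $x_0=-\frac12\sum_t v^{(t)}$. The involution $S\mapsto S^c$ sends $x_0+v_S$ to $-(x_0+v_S)$ and multiplies the sign $(-1)^{d-|S|}$ by $(-1)^d$. Since also $g(-y)=(-1)^{d+1}g(y)$, you get $F(x_0)=-F(x_0)$, so $F(x_0)=0$ because $\ch(\F)\neq 2$.

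\textbf{Comparison.} Your argument uses only the defining properties of $\polar{g}$ (independence of the base point and multilinearity) together with the parity of a form. It needs no coordinates and explains the $\frac12$: it is the center of the parallelepiped $\{x+v_S\}$. The same pairing also shows, for a form $g$ of any degree, that $F(x_0+y)=(-1)^{\deg(g)+d}F(x_0-y)$. The paper's route involves more bookkeeping. However, the semi-surjection formula it builds is reused for Facts~\ref{fact:polar} and~\ref{fact:diagonal} and Lemma~\ref{lemma:Delta-Nabla}, so in the paper's economy it costs little extra.

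\textbf{For the write-up.} Delete the opening remark and the ``multilinear-plus-one'' paragraph; they are an unfinished detour that your final argument never uses. The work is done by the pairing $S\leftrightarrow S^c$ for fixed $\v$, not by the substitution $v^{(t)}\mapsto -v^{(t)}$, so say that. The claim that $F$ has degree at most $1$ is also unnecessary, since $F(x+w)-F(x)=\polar{g}(\v,w)$ already gives affineness.
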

Put differently, $\D^d_{\vec{v}} g(x) = x'\cdot \Nabla \polar{g}(\vec{v})$.
We will also show that, although not entirely obvious, polarization and formal derivation commute. 

\begin{lemma}\label{lemma:Delta-Nabla}
    For any form $g$ and any direction vector $c$,
    \[\partial_c \polar{g} = \polar{\partial_c g}.\]
\end{lemma}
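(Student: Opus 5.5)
By linearity of both sides in $g$ it suffices to treat a single monomial, or more conveniently, to use a characterization that avoids coordinates. Let $k=\deg(g)$. Then $\partial_c g$ is a form of degree $k-1$, so $\polar{\partial_c g}$ is the $(k-1)$-linear form $\D^{k-1}(\partial_c g)$. Meanwhile $\partial_c\polar{g}(\vec v)=\polar{g}(\vec v,c)$ is also a $(k-1)$-linear form in $\vec v=(v^{(1)},\ldots,v^{(k-1)})$. The plan is to show both equal the same expression by computing $\polar{g}$ explicitly through formal derivatives. The key identity is that for a form $g$ of degree $k$,
\[\polar{g}(x^{(1)},\ldots,x^{(k)}) = \partial_{x^{(1)}}\partial_{x^{(2)}}\cdots\partial_{x^{(k)}} g,\]
an iterated formal directional derivative, which is a constant because $g$ has degree $k$.

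First I establish this identity. Both sides are linear in $g$, so I check it on a form $g$ of degree $k$ directly. For any polynomial $f$ of degree at most $m$ and direction $v$, Taylor expansion, which is a formal identity valid over any field, gives $\D_v f=\partial_v f+R$, where $R$ collects the terms of order at least $2$ in $v$ and has degree at most $m-2$ in $x$. Iterating, $\D_{x^{(k)}}\cdots\D_{x^{(1)}} g$ equals $\partial_{x^{(k)}}\cdots\partial_{x^{(1)}} g$ plus terms in which some step lowered the degree by at least $2$. After $k$ steps those terms have degree below $0$ in $x$. The subtle point is that these error terms are polynomials in both $x$ and the directions, and the argument tracks degree only in $x$. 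So I must argue more carefully that each such term vanishes: each step reduces the $x$-degree by at least $1$, with one step reducing it by at least $2$, so the total reduction exceeds $k$. Since formal derivatives in $x$ do not affect the direction variables, the term is identically $0$. Formal partial derivatives commute, so the order of the $\partial$'s does not matter.

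With the identity in hand, the lemma follows quickly. We have
\[\partial_c\polar{g}(\vec v)=\polar{g}(v^{(1)},\ldots,v^{(k-1)},c)=\partial_{v^{(1)}}\cdots\partial_{v^{(k-1)}}\partial_c g.\]
Applying the identity to the degree-$(k-1)$ form $\partial_c g$ shows that $\polar{\partial_c g}(\vec v)=\partial_{v^{(1)}}\cdots\partial_{v^{(k-1)}}(\partial_c g)$, which is the same expression. The edge case where $\partial_c g$ has lower degree (for instance $\partial_c g=0$) is consistent, since both sides vanish. Note that $\partial_c\polar{g}$ is defined with $c$ in the last slot, and $\polar{g}$ is symmetric, so the choice of slot is immaterial.

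The main obstacle is the bookkeeping in the Taylor-expansion step, namely showing over arbitrary characteristic that the higher-order terms of the discrete derivative die after $k$ differences. If that becomes messy, I would instead verify the identity on monomials. For $g=\prod_j x_{i_j}$ with repetitions, the discrete $k$-th difference and the iterated formal derivative both yield the sum over bijections assigning the $k$ directions to the $k$ factors of $\prod_j x^{(\sigma(j))}_{i_j}$. This follows from the multilinear expansion of $\prod_j(x+\cdots)_{i_j}$ by inclusion–exclusion, which holds in any characteristic.
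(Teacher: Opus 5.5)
Your proof is correct, but it takes a different route from the paper.

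\textbf{The paper's route.} It reduces to a single monomial $g=\prod_j x_{i_j}$ and a coordinate direction $c=e_n$, using symmetry of $\polar{g}$ and linearity in $c$. It then writes both sides explicitly as permutation sums via Fact~\ref{fact:polar}:
\begin{itemize}
\item $\polar{\partial_{x_n} g}$ as a sum over $t$ with $i_t=n$ and $\sigma\in S_{k-1}$;
\item $\partial_{x^{(k)}_n}\polar{g}$ as a sum over $t$ with $i_t=n$ and $\sigma\in S_k$ with $\sigma(t)=k$.
\end{itemize}
The two sums are then matched by inspection.

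\textbf{Your route.} You first prove the identity $\polar{g}(x^{(1)},\ldots,x^{(k)})=\partial_{x^{(1)}}\cdots\partial_{x^{(k)}} g$. After that the lemma is a one-liner: both sides equal $\partial_{v^{(1)}}\cdots\partial_{v^{(k-1)}}\partial_c g$. This uses that formal directional derivatives commute, and that substituting $x^{(k)}=c$ commutes with the remaining derivations in the base variable.

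Your $x$-degree count for the identity is sound. Writing $\D_v=\partial_v+R_v$ with $R_v$ linear, each factor lowers the $x$-degree by at least $1$, and each $R_v$ lowers it by at least $2$. So every one of the $2^k-1$ mixed terms has $x$-degree at most $-1$ and vanishes.

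\textbf{Comparison.} Your route gives a coordinate-free reason for the commutation and needs no reduction to coordinate directions or appeal to symmetry. It also makes transparent that nothing about $\ch(\F)$ is used. The paper's route recycles Fact~\ref{fact:polar}, which it needs anyway. Your fallback of checking the identity on monomials is essentially the paper's argument.

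\textbf{One phrasing fix.} The classical Taylor formula with $1/\alpha!$ coefficients fails in small characteristic, so it is not a ``formal identity valid over any field''. What you actually use does hold over every field: group the binomial expansion of $f(x+v)$ by degree in $v$. The $v$-linear part is exactly $\partial_v f$, and the rest has $x$-degree at most $\deg_x f-2$. Equivalently, you can phrase this with Hasse derivatives.
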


It is instructive to consider simple examples of these somewhat subtle identities.

\begin{example}[Example for Lemma~\ref{lemma:Delta-Nabla}]
    For $g(x) = x_1 x_2^2$,
    we have $\partial_c g(x) = c \cdot (x_2^2,\, 2x_1x_2)$,
    so that $\polar{\partial_c g}(x,y) = c \cdot (2x_2y_2,\, 2x_1y_2 + 2x_2y_1)$.
    On the other hand, $\polar{g}(x,y,z) = 2x_1 y_2 z_2 + 2y_1 x_2z_2 + 2z_1 x_2y_2$,
    so that $\partial_c \polar{g}(x,y) = \polar{g}(x,y,c)
    = c \cdot (2x_2y_2,\, 2x_1y_2+2x_2y_1)$ as well.
\end{example}

\begin{example}[Example for Proposition~\ref{prop:dd}]
For $d=2$ and $g(x)=x_1x_2x_3$, 
we have by definition 
\begin{align*}
    \D^2_{u,v} g(x) &= g(x+u+v)-g(x+u)-g(x+v)+g(x)\\
    &= (x_1+u_1+v_1)(x_2+u_2+v_2)(x_3+u_3+v_3)\\
    &- (x_1+u_1)(x_2+u_2)(x_3+u_3) - (x_1+v_1)(x_2+v_2)(x_3+v_3) + x_1x_2x_3.
\end{align*}
One can group the terms above in the following suggestive way:
\begin{align*}
    \D^2_{u,v} g(x) &= x_1(u_2v_3 + v_2u_3) + x_2(u_1v_3 + v_1u_3) + x_3(u_1v_2 + v_1u_2)\\
    &+ (u_1u_2v_3 + u_1v_2u_3 + v_1u_2u_3) + (v_1v_2u_3 + v_1u_2v_3 + u_1v_2v_3),
\end{align*}
and a short inspection reveals 
\begin{align*}
    \D^2_{u,v} g(x) 
    &= \big(x+\frac12u + \frac12v\big)\cdot \big(u_2v_3 + v_2u_3,\, u_1v_3 + v_1u_3,\, u_1v_2+v_1u_2\big) .
\end{align*}
Note that by Lemma~\ref{lemma:Delta-Nabla}, we can compute $\Nabla \polar{g}$ as $\polar{\Nabla g}$. And indeed, 
\[\D^2_{u,v} g(x) = \big(x+\frac12u + \frac12v\big)\cdot 
    \polar{\big(x_2x_3,\, x_1x_3,\, x_1x_2\big)}(u,v) \;.\]
\end{example}

\subsection{Discrete derivatives: Proofs}

\newcommand{\sur}{\twoheadrightarrow}

We first give a combinatorial description of the iterated discrete derivatives of any form (and by linearity, of any polynomial) in terms of \emph{semi-surjections}.
Henceforth, 
\[\varphi \colon [k]\sur[d] \text{ means } \varphi \colon [k]\to[d+1] \text{ with } \Image(\varphi) \supseteq [d].\]
We refer to a function $\varphi \colon [k]\sur[d]$ as a semi-surjection since it is surjective on its codomain except, perhaps, the last element, $d+1$.
Denote by $S_n$ the set of permutations $\sigma \colon [n] \to [n]$.

\begin{lemma}\label{lemma:add-deriv}
    For any form $f(x) = \sum_{I} c_I \prod_{j=1}^k x_{I_j}$, 
    with each $I=(I_1,\ldots,I_k)$ a sorted $k$-tuple of variables and $c_I$ a scalar,
    \[\D^d_{x^{(1)},\ldots,x^{(d)}} f(x^{(d+1)})
    = \sum_I c_I \sum_{\varphi \colon [k]\sur[d]}\, \prod_{j=1}^k x_{I_j}^{(\varphi(j))} .\] 
\end{lemma}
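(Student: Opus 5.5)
By linearity of $\D^d$ in $f$, it suffices to treat a single monomial $f(x)=\prod_{j=1}^k x_{I_j}$. Write $y=x^{(d+1)}$ for the base point. The plan is to expand the iterated difference by inclusion--exclusion over subsets of directions. Iterating $\D_v f(x)=f(x+v)-f(x)$ gives the standard formula
\[\D^d_{x^{(1)},\ldots,x^{(d)}} f(y)=\sum_{S\sub[d]}(-1)^{d-|S|} f\Big(y+\sum_{t\in S}x^{(t)}\Big),\]
which follows by induction on $d$.

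Next, expand each term multiplicatively. For a monomial,
\[f\Big(y+\sum_{t\in S}x^{(t)}\Big)=\prod_{j=1}^k\Big(y_{I_j}+\sum_{t\in S}x^{(t)}_{I_j}\Big)=\sum_{\varphi\colon[k]\to S\cup\{d+1\}}\ \prod_{j=1}^k x^{(\varphi(j))}_{I_j},\]
where we use the convention $x^{(d+1)}=y$. Substituting this into the alternating sum and exchanging the order of summation, the coefficient of the term indexed by a function $\varphi\colon[k]\to[d+1]$ is
\[\sum_{S\colon \Image(\varphi)\cap[d]\sub S\sub[d]}(-1)^{d-|S|}.\]
Put $J=\Image(\varphi)\cap[d]$. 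Writing $S=J\cup T$ with $T\sub[d]\setminus J$, this sum equals $(1-1)^{d-|J|}$. It is therefore $1$ if $J=[d]$ and $0$ otherwise. The surviving $\varphi$ are exactly those with $\Image(\varphi)\supseteq[d]$, i.e.\ the semi-surjections $[k]\sur[d]$. This gives the claimed formula.

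There is no real obstacle here; the only care needed is bookkeeping. One point is that the monomial is written as an ordered product over positions $j\in[k]$, even when some variables repeat. This is exactly why we sum over functions on positions $[k]$ rather than over monomials, and it keeps the expansion valid with multiplicities, without needing $\ch(\F)$ assumptions. The other point is to verify the sign convention of the inclusion--exclusion formula, which a quick induction step confirms: $\D_{x^{(d)}}$ applied to the formula for $d-1$ splits each $S$ into $S$ and $S\cup\{d\}$ with the correct signs.
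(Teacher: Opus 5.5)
Your proof is correct, but it reaches the formula by a different route than the paper. The paper never passes through the alternating-sum expansion. It proves the semi-surjection formula itself by induction on $d$, in four steps:
\begin{itemize}
\item It writes $\D^d_{x^{(1)},\ldots,x^{(d)}} f(x^{(d+1)}) = \D^{d-1}_{\vec{x}} f(x^{(d+1)}+x^{(d)}) - \D^{d-1}_{\vec{x}} f(x^{(d+1)})$ with $\vec{x}=(x^{(1)},\ldots,x^{(d-1)})$.
\item It applies the induction hypothesis to both terms, with slot $d$ playing the role of the base point.
\item It expands $\prod_{j\in\inv{\varphi}(d)}(x^{(d+1)}_{i_j}+x^{(d)}_{i_j})-\prod_{j\in\inv{\varphi}(d)}x^{(d+1)}_{i_j}$ as a sum over maps $\tau\colon\inv{\varphi}(d)\to\{d,d+1\}$ that hit $d$.
\item It matches pairs $(\varphi,\tau)$ bijectively with semi-surjections for $d$.
\end{itemize}
You instead expand all $2^d$ evaluations at once over functions $[k]\to[d+1]$, and let the binomial identity $(1-1)^{d-|J|}$ annihilate every function that misses some direction.

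Your version makes it transparent why exactly the maps with $\Image(\varphi)\supseteq[d]$ survive: it is inclusion--exclusion over the set of directions that $\varphi$ hits. It also confines the only induction to the standard difference formula. The paper's version is cancellation-free, with every term carrying coefficient $+1$ at each stage. The price is the slightly fiddly relabeling of the base-point slot in the induction step. Both arguments are purely formal and characteristic-free, as you observe.
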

\begin{proof}
    By linearity we may assume that $f$ is a single monomial, $f = \prod_{j=1}^k x_{i_j}$,
    so it suffices to prove
    \[\D^d_{x^{(1)},\ldots,x^{(d)}} f(x^{(d+1)}) = \sum_{\varphi \colon [k]\sur[d]}\, \prod_{j=1}^k x_{i_j}^{(\varphi(j))}.\]
    We proceed by induction on $d \ge 0$. The base case $d=0$ holds
    since the only function $\varphi \colon [k]\to\{1\}$ is $\varphi=1$, 
    so the right hand side is 
    $\prod_{j=1}^k x_{i_j}^{(1)} = f(x^{(1)}) = \D^0_{\vec{x}} f(x^{(1)})$.

    For the induction step, abbreviate $\vec{x}=(x^{(1)},\ldots,x^{(d-1)})$, and observe that
    \begin{align*}
        \D^d_{x^{(1)},\ldots,x^{(d)}} f(x^{(d+1)})
        &= \D_{x^{(d)}} \D^{d-1}_{\vec{x}} f(x^{(d+1)})
        = \D^{d-1}_{\vec{x}} f(x^{(d+1)}+x^{(d)})
        - \D^{d-1}_{\vec{x}} f(x^{(d+1)})\\
        &=  \sum_{\varphi \colon [k]\sur[d-1]} \,
        \prod_{j\colon\varphi(j) \neq d} x_{i_j}^{(\varphi(j))} 
        \Big( \prod_{j\colon\varphi(j) = d} (x_{i_j}^{(d+1)} + x_{i_j}^{(d)})
        - \prod_{j\colon\varphi(j) = d} x_{i_j}^{(d+1)} \Big)\\
        &= \sum_{\varphi \colon [k]\sur[d-1]} \,
        \prod_{j\colon\varphi(j) \neq d} x_{i_j}^{(\varphi(j))} 
        \sum_{\substack{\tau \colon \inv{\varphi}(d)\to\{d,d+1\}\colon\\d \in \Image(\tau)}} \, 
        \prod_{j\colon\varphi(j) = d} x_{i_j}^{(\tau(j))}\\
        &= \sum_{\varphi \colon [k]\sur[d]} \, 
        \prod_{j=1}^k x_{i_j}^{(\varphi(j))} ,
    \end{align*}
    where the third equality uses the induction hypothesis twice,
    and the last equality uses the natural bijection between semi-surjections $\varphi' \colon [k]\sur[d]$
    and pairs 
    $(\varphi,\tau)$
    where $\varphi \colon [k]\sur[d-1]$ and 
    $\tau \colon \inv{\varphi}(d)\to\{d,d+1\}$ with $d \in \Image(\tau)$.
    This completes the induction step and the proof.
    
\end{proof}

We record two facts about polarizations, which can be deduced from Lemma~\ref{lemma:add-deriv} effortlessly.
\begin{fact}\label{fact:polar}
    For any monomial $g(x) = \prod_{j=1}^k x_{i_j}$,
    \[\polar{g}(x^{(1)},\ldots,x^{(k)}) 
    = \sum_{\sigma \in S_k}\, \prod_{j=1}^k x_{i_j}^{(\sigma(j))} \,.\]
\end{fact}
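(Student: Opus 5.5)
We derive Fact~\ref{fact:polar} from Lemma~\ref{lemma:add-deriv} by specializing to $d=k$. By definition $\polar{g}(x^{(1)},\ldots,x^{(k)}) = \D^k_{x^{(1)},\ldots,x^{(k)}} g(y)$ for any base point $y$, since $\deg(g)=k$ makes this expression independent of $y$. We apply Lemma~\ref{lemma:add-deriv} with $d=k$ and base point $x^{(k+1)}=y$, taking $f=g$ to be the single monomial $\prod_{j=1}^k x_{i_j}$. Sorting the indices changes nothing, since the product is commutative. This gives
\[\polar{g}(x^{(1)},\ldots,x^{(k)}) = \sum_{\varphi\colon[k]\sur[k]} \prod_{j=1}^k x_{i_j}^{(\varphi(j))}.\]

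It remains to identify the semi-surjections $\varphi\colon[k]\sur[k]$. Such a $\varphi$ is a map $[k]\to[k+1]$ whose image contains $[k]$. Since the domain has only $k$ elements, the image is exactly $[k]$, and $\varphi$ never takes the value $k+1$. Hence $\varphi$ is a surjection $[k]\to[k]$, i.e.\ a permutation $\sigma\in S_k$. Conversely, every permutation is a semi-surjection. Substituting this identification into the sum yields exactly the claimed formula. In particular, the base variable $x^{(k+1)}$ does not appear, which is consistent with the independence of $y$ noted above.

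There is no real obstacle. The only point needing care is the counting step ruling out the value $k+1$, together with the remark that Lemma~\ref{lemma:add-deriv} is stated for sorted tuples while the fact allows any listing $i_1,\ldots,i_k$. Reordering the factors $j$ corresponds to relabeling $\sigma\mapsto\sigma\circ\pi$, a bijection of $S_k$, so the sum is unchanged.
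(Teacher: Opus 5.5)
Your proposal is correct and follows essentially the paper's intended route: the paper states the fact is deduced ``effortlessly'' from Lemma~\ref{lemma:add-deriv}, and that is what you do by setting $d=k$. The key observation is that a semi-surjection $[k]\sur[k]$ must hit all of $[k]$ with only $k$ inputs, so it is exactly a permutation in $S_k$.
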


\begin{fact}\label{fact:diagonal}
    Any $g \in \Forms_k$ satisfies
    $\polar{g}(x,x,\ldots,x) = k!g(x)$.
\end{fact}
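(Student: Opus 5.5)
We derive Fact~\ref{fact:diagonal} from Fact~\ref{fact:polar}, which in turn we read off from Lemma~\ref{lemma:add-deriv} with $d=k$.

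\textbf{Fact~\ref{fact:polar}.} By definition, $\polar{g}(x^{(1)},\ldots,x^{(k)}) = \D^k_{x^{(1)},\ldots,x^{(k)}} g(x^{(k+1)})$, and this value is independent of $x^{(k+1)}$. Applying Lemma~\ref{lemma:add-deriv} with $d=k$ to the monomial $g=\prod_{j=1}^k x_{i_j}$, the sum runs over maps $\varphi\colon[k]\to[k+1]$ whose image contains $[k]$. A map from a $k$-element set whose image contains $k$ given elements must be a bijection onto $[k]$. So $d+1=k+1$ is never hit, and the semi-surjections are exactly the permutations $\sigma\in S_k$. This gives
\[\polar{g}(x^{(1)},\ldots,x^{(k)})=\sum_{\sigma\in S_k}\prod_{j=1}^k x_{i_j}^{(\sigma(j))},\]
and it confirms that no dependence on $x^{(k+1)}$ survives.

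\textbf{Fact~\ref{fact:diagonal}.} Write $g=\sum_I c_I\prod_{j=1}^k x_{I_j}$. Polarization $g\mapsto\D^k g$ is linear in $g$, since discrete derivatives are linear. Substituting $x^{(1)}=\cdots=x^{(k)}=x$ into Fact~\ref{fact:polar}, every one of the $k!$ permutation terms equals $\prod_j x_{I_j}$. Hence each monomial contributes $k!\prod_j x_{I_j}$, and summing with coefficients $c_I$ yields $\polar{g}(x,\ldots,x)=k!\,g(x)$.

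There is no real obstacle here. The only point that needs care is the counting step showing that semi-surjections $[k]\sur[k]$ are exactly the permutations of $[k]$. One should also note that repeated variables in a monomial cause no problem: the formula is indexed by positions $j$, not by distinct variables.
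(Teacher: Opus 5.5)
Your proof is correct and follows the paper's route. The paper records Fact~\ref{fact:polar} and Fact~\ref{fact:diagonal} as immediate consequences of Lemma~\ref{lemma:add-deriv}, using exactly your observation that for $d=k$ the semi-surjections $[k]\to[k+1]$ are precisely the permutations of $[k]$; substituting the diagonal then gives $k!$ copies of each monomial.
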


Using the explicit description of polarization, we now prove 
that polarization commutes with the formal derivative.
\begin{manuallemma}[\ref{lemma:Delta-Nabla}]
     For any form $g$ and direction vector $c$,
    \[\partial_c \polar{g} = \polar{\partial_c g}.\]
\end{manuallemma}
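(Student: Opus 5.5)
By linearity of both sides in $g$ (polarization, i.e.\ $\D^k$, is linear in $g$; $\partial_c$ is linear in $g$; and $\partial_c T = T(\cdot,c)$ is linear in $T$), and by linearity in $c$, it suffices to treat a single monomial $g(x)=\prod_{j=1}^k x_{i_j}$ and a standard basis direction $c=e_\ell$. Both sides are then explicit, and I will compare them using Fact~\ref{fact:polar}. If $k=0$ both sides are zero, so assume $k\ge 1$.

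\textbf{Left-hand side.} By Fact~\ref{fact:polar}, $\polar{g}(x^{(1)},\ldots,x^{(k)}) = \sum_{\sigma\in S_k}\prod_{j} x_{i_j}^{(\sigma(j))}$. Substituting $x^{(k)}=e_\ell$ kills every term unless the unique $j$ with $\sigma(j)=k$ satisfies $i_j=\ell$, in which case that factor becomes $1$. Hence
\[\partial_{e_\ell}\polar{g}(x^{(1)},\ldots,x^{(k-1)}) = \sum_{j_0\colon i_{j_0}=\ell}\ \sum_{\substack{\sigma\in S_k\\ \sigma(j_0)=k}} \prod_{j\neq j_0} x_{i_j}^{(\sigma(j))}.\]
For fixed $j_0$, the permutations $\sigma$ with $\sigma(j_0)=k$ correspond bijectively to bijections $[k]\setminus\{j_0\}\to[k-1]$.

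\textbf{Right-hand side.} The formal derivative is $\partial_{e_\ell} g = \sum_{j_0\colon i_{j_0}=\ell}\prod_{j\neq j_0} x_{i_j}$. This is the product rule written with multiplicity: if $x_\ell$ appears $m$ times, there are $m$ identical summands, matching the coefficient $m\,x_\ell^{m-1}$. Each summand is a monomial of degree $k-1$ indexed by $[k]\setminus\{j_0\}$. Fact~\ref{fact:polar}, applied to each such monomial after relabeling the index set $[k]\setminus\{j_0\}$ to $[k-1]$, gives $\polar{\prod_{j\ne j_0}x_{i_j}} = \sum_{\text{bijections } \tau\colon [k]\setminus\{j_0\}\to[k-1]}\prod_{j\ne j_0} x_{i_j}^{(\tau(j))}$. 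Since polarization is linear, summing over $j_0$ yields exactly the expression for the left-hand side.

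\textbf{Main point of care.} The only delicate step is the bookkeeping of multiplicities. I must check that Fact~\ref{fact:polar} holds for monomials with repeated variables when written over the index set $[k]$, which it does as stated. I must also check that the formal derivative, counted with multiplicity as above, matches the definition $\frac{\partial}{\partial x_\ell}x_\ell^{m}=m x_\ell^{m-1}$, including over fields of positive characteristic, where both sides simply carry the same integer coefficients. Finally, the degree-$1$ case $k=1$ should be checked separately: there $\polar{\partial_c g}$ is the constant $c_{i_1}$, matching $\polar{g}(c)=c_{i_1}$.
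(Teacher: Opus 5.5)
Your proof is correct and follows essentially the same route as the paper: reduce by linearity to a single monomial and a coordinate direction, expand both sides using Fact~\ref{fact:polar}, and match terms indexed by an occurrence $j_0$ of $x_\ell$ together with a permutation sending $j_0$ to $k$, or equivalently a bijection $[k]\setminus\{j_0\}\to[k-1]$. Your explicit restriction/relabeling bijection makes precise the matching that the paper leaves to ``a quick inspection''.
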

\begin{proof}[Proof of Lemma~\ref{lemma:Delta-Nabla}]
    It suffices to show
    $\partial_{x_n^{(k)}} \polar{g}(\x) = \polar{\partial_{x_n} g}(\x)$
    for $g(x) = \prod_{j=1}^{k} x_{i_j}$ ($i_1\le\cdots\le i_k$) a single monomial, and $\x=(x^{(1)},\ldots,x^{(k-1)})$,
    since the result would follow by symmetry and linearity (the polarization $\overline{g}$ is symmetric by Fact~\ref{fact:polar}; directional derivatives are linear in $c$).
    On the one hand,\footnote{For example, $\partial_{x_2}(x_1x_2^2) = x_1x_2 + x_1x_2$.}
    \[\partial_{x_n} g(x) 
    = \sum_{\substack{t \in [k]\\i_t = n}}\, \prod_{\substack{j=1\\j\neq t}}^k x_{i_j}\,
    = \sum_{\substack{t \in [k]\\i_t = n}}\, \prod_{j=1}^{k-1} x_{i_j} ,\]
    and therefore
    \begin{equation}\label{eq:derive1}
        \polar{\partial_{x_n} g}(x^{(1)},\ldots,x^{(k-1)}) 
    = \sum_{\sigma \in S_{k-1}}\, \sum_{\substack{t \in [k]\\i_t = n}}\, \prod_{j=1}^{k-1} x_{i_j}^{(\sigma(j))}.
    \end{equation}
    On the other hand,
    \[\polar{g}(x^{(1)},\ldots,x^{(k)}) = \sum_{\sigma \in S_k} \, \prod_{j=1}^{k} x_{i_j}^{(\sigma(j))}\]
    is multilinear, so its only monomials containing $x^{(k)}_n$ are those where an occurrence of $x_n$ is replaced by $x_n^{(k)}$ (that is, $i_t=n$ and $\sigma(t)=k$); therefore,
    \begin{equation}\label{eq:derive2}
    \partial_{x^{(k)}_n} \polar{g}(x^{(1)},\ldots,x^{(k-1)}) 
    = \sum_{\substack{t \in [k]\\i_t = n}}\, \sum_{\substack{\sigma \in S_k\\\sigma(t)=k}}\, \prod_{\substack{j=1\\j \neq t}}^k x_{i_j}^{(\sigma(j))}
    = \sum_{\substack{t \in [k]\\i_t = n}}\, \sum_{\substack{\sigma \in S_k\\\sigma(t)=k}}\, \prod_{j=1}^{k-1} x_{i_j}^{(\sigma(j))} \;.
    \end{equation}
    A quick inspection shows that~(\ref{eq:derive1}) and~(\ref{eq:derive2}) are equal, as needed.  

\end{proof}

We are now ready to prove our formula for the order-$d$ derivative of forms of degree $d+1$.

\begin{proof}[Proof of Proposition~\ref{prop:dd}]
    By linearity, we may assume that we are given a single monomial, $f = \prod_{j=1}^{d+1} x_{i_j}$, of degree $d+1$.
    We need to show, for $\vec{x}=(x^{(1)},\ldots,x^{(d)})$, that 
    \[\D^d_{\vec{x}} f(x^{(d+1)}) = \polar{f}\Big(\vec{x},\, x^{(d+1)} + \frac12\sum_{t=1}^d  x^{(t)} \Big).\]
    For a function $\varphi \colon [d+1]\to[d+1]$, we abbreviate $f^{(\varphi)} = \prod_{j=1}^{d+1} x_{i_j}^{(\varphi(j))}$.
    Observe that every semi-surjection $\varphi \colon [d+1]\sur[d]$ is either a permutation or takes exactly one value twice.
    Therefore, by Lemma~\ref{lemma:add-deriv},
    \begin{align*}
    \D^d_{\vec{x}} f(x^{(d+1)}) 
    &= \sum_{\varphi \colon [d+1]\sur[d]}\, f^{(\varphi)}\\
    &= \sum_{\sigma \in S_{d+1}}\, f^{(\sigma)}\,
    +\, \sum_{t=1}^d\, \sum_{\varphi \in S_d^t}\, f^{(\varphi)}
    \end{align*}
    where $S_d^t$ denotes the set of all surjections $\varphi \colon [d+1]\to[d]$ satisfying $|\inv{\varphi}(t)|=2$.
    For $t \in [d]$ and a permutation $\sigma \in S_{d+1}$, 
    consider the function $\sigma_t \in S_d^t$ given by
    \[\sigma_t(x) := \begin{cases}
        t           & \text{if }\sigma(x) = d+1\\
        \sigma(x)   & \text{else}
    \end{cases}\;.\]
    We claim that the map $\sigma \mapsto \sigma_t$, from $S_{d+1}$ to $S_d^t$, is $2$-to-$1$.
    Indeed, for any $\varphi \in S_d^t$, write $\varphi^{-1}(t)=\{a,b\}$ and consider the two permutations
    \[\varphi_{a \mapsto d+1}(x)
    = \begin{cases}
        d+1    & \text{if }x = a\\
        \varphi(x)   & \text{else}
    \end{cases}
    \quad\text{ and }\quad
    \varphi_{b \mapsto d+1}(x)
    = \begin{cases}
        d+1    & \text{if }x = b\\
        \varphi(x)   & \text{else}
    \end{cases}.\]
    A moment's thought reveals that 
    $\sigma \in S_{d+1}$ satisfies $\sigma_t=\varphi$ if and only if $\sigma \in \{\varphi_{a \mapsto d+1},\varphi_{b \mapsto d+1}\}$.

    

    Going back to $\D^d_{\vec{x}} f(x^{(d+1)})$, 
    and using the fact that $\ch(\F) \neq 2$,
    we thus have
    \[\sum_{\varphi \in S_d^t}\, f^{(\varphi)}
    = \frac12\sum_{\sigma \in S_{d+1}} f^{(\sigma_t)}.\]
    Therefore,
    \begin{align*}
        \D^d_{\vec{x}} f(x^{(d+1)}) &= 
        \sum_{\sigma \in S_{d+1}}\, f^{(\sigma)}\,
        +\, \sum_{t=1}^d\, \frac12\sum_{\sigma \in S_{d+1}} f^{(\sigma_t)}\\
        &= \polar{f}(\vec{x},x^{(d+1)}) + \sum_{t=1}^d\, \frac12\polar{f}(\vec{x},x^{(t)}) \\
        &= \polar{f}\Big(\vec{x}, x^{(d+1)} + \frac12\sum_{t=1}^d  x^{(t)} \Big).
    \end{align*}
    using Fact~\ref{fact:polar} in the second equality, and the multilinearity of $\polar{f}$ in the last equality. This completes the proof.
\end{proof}

\section{Proof of Theorem \ref{thm:d+1-PGI-poly}}\label{sec:main-pf}

\subsection{Bias of general polynomials}

While the bias of a multilinear form is roughly characterized by its rank (recall Theorem \ref{thm:SvR}), the situation is more involved for general, non-homogeneous polynomials.

\begin{remark}
    While the bias of a polynomial is closely related to the bias of its top-degree component, it can have  
    little to do with the bias of its lower-degree components.
    For example, consider the univariate polynomial $x^2+x$, over $\F_p$ with $p>2$.
    We have
    \[\bias(x^2+x)\cdot p = \Big|\sum_{x \in \F_p} e(x^2+x)\Big|
    = \Big|e(-1/4)\sum_{x \in \F_p} e((x+1/2)^2)\Big|
    = \Big|\sum_{y \in \F_p} e(y^2)\Big|
    = \sqrt{p},\]    
    where the last step uses the formula for a basic quadratic Gauss sum.
    Thus, $\bias(x^2+x)=\bias(x^2) = p^{-1/2}$. 
    However, for the lower degree component of $x^2+x$ we have $\bias(x) = 0$.
\end{remark}

\begin{remark}\label{remark:D-bias-nonhom}
    For non-homogeneous polynomials $f$,
    having large $\bias(\D^d f)$ does not guarantee that the lower-degree components of $f$ are structured or biased. 
    For example, consider $f(x,y)=h(x)\cdot \ell(y) \in \Poly_{d+1}(\F_q)$, where $\ell$ is non-homogeneous with $\deg(\ell)=1$
    and $h \in \Forms_d$ is a random $n$-variate form of degree $d \ge 2$.
    The degree-$d$ component of $f$ is a random homogeneous polynomial
    (in particular, $\bias(f_{d}) \le q^{-\Omega(n)}$).
    However, $\bias(f) = \P(h=0) \approx 1/q$ by a quick calculation,\footnote{$\bi(f) = \Exp_x \Exp_y \chi(h(x)l(y)) 
    =\Exp_x \mathbf{1}(h(x)=0) = \P(h=0)$.}
    so $\bias(\D^d f) \ge \bias(f)^{2^d} \ge q^{-O_d(1)}$ is large as $n\to\infty$.
\end{remark}

First, we prove a rather general chain rule for bias.
For an event $E$, we henceforth define the \emph{conditional bias} as $\bi(f \vvert E) = \Exp(\chi(f) \vvert E)$.

\begin{lemma}[Chain rule for bias, a general form]\label{lemma:bias-chain}
    Let $A(x,y)$ and $B(y)$ be functions over a finite field, and assume $A$ is linear in $x$.
    For the event $E=\{y \vert \forall x \colon A(x,y)=0\}$, 
    we have
    \[\bi(A+B) = \bi(A)\bi(B \vvert E).\]
\end{lemma}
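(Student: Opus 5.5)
Since $A$ is linear in $x$, for each fixed $y$ the map $x \mapsto A(x,y)$ is a linear functional on the $x$-space. I will therefore compute $\bi(A+B)$ by averaging over $x$ first with $y$ fixed, and then average over $y$.

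\emph{Step 1 (inner average).} For fixed $y$,
\[
\Exp_x \chi(A(x,y)+B(y)) = \chi(B(y))\,\Exp_x \chi(A(x,y)).
\]
The character $x\mapsto\chi(A(x,y))$ is a character of the additive group of the $x$-space. It is trivial exactly when $A(\cdot,y)\equiv 0$, that is, when $y\in E$. Otherwise $A(\cdot,y)$ is a nonzero linear functional, hence surjective onto $\F$, and $\chi\circ A(\cdot,y)$ is a nontrivial character. (This uses only that $\chi$ is nontrivial and that each value of the functional is attained equally often.) Orthogonality of characters then gives $\Exp_x \chi(A(x,y)) = \1_E(y)$.

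\emph{Step 2 (outer average).} Averaging over $y$,
\[
\bi(A+B) = \Exp_y \1_E(y)\,\chi(B(y)) = \P(E)\cdot \bi(B \vvert E).
\]
The same computation with $B=0$ yields $\bi(A)=\P(E)$. Substituting this gives $\bi(A+B)=\bi(A)\,\bi(B\vvert E)$.

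\emph{Edge case.} If $E=\emptyset$, then both sides vanish, using the same convention as in Lemma~\ref{lemma:avg-cor} for the conditional bias. The only point needing care is the surjectivity and equidistribution of a nonzero linear functional composed with $\chi$; this is standard, so I do not expect any genuine obstacle here.
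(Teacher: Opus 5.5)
Your proof is correct and follows essentially the same route as the paper: average over $x$ with $y$ fixed, note that the inner average vanishes for $y\notin E$ because $A(\cdot,y)$ is then a nonzero linear functional, and use $\bi(A)=\P(E)$. The only difference is cosmetic: the paper concludes via the law of total expectation over $E$ and $E^c$ rather than by factoring out $\chi(B(y))$, and your explicit handling of the case $E=\emptyset$ is a small addition the paper leaves implicit.
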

\begin{proof}
    Observe that for any fixed $y \in E^c$, $A(x,y)+B(y)$ 
    has degree exactly $1$ (in $x$),
    and thus 
    takes any value in the field with the same probability.
    Furthermore observe that
    \[\bi(A) = \Exp(\chi(A)) = \Exp_y \Exp_x(\chi(A(x,y))) = \Exp_y \1_E(y) = \P(E).\]
    Therefore, by the law of total expectation,
    \begin{align*}
        \bi(A+B) &= \Exp(\chi(A+B)) 
        = \P(E)\Exp(\chi(A+B) \vvert E) + \P(E^c)\Exp(\chi(A+B) \vvert E^c)\\
        &= \bi(A)\bi(B \vvert E) + \P(E^c)\cdot 0. \qedhere
    \end{align*}
\end{proof}

Using Proposition~\ref{prop:dd} and Lemma~\ref{lemma:bias-chain}, our main result in this subsection
bounds the bias of the $d$-times iterated derivative of $f \in \Poly_{d+1}$ in terms of (polarizations of) the components of $f$.
\begin{proposition}\label{prop:derive-bias}
    For $f \in \Poly_{d+1}(\F)$ where $\ch(\F)> 2$,
    with $g=f_{d+1}$ and $h=f_d$,
    \[\bias(\D^d f) \le \Exp_c \bias(\polar{h} - \partial_c\polar{g})
    \quad\text{ and }\quad 
    \bias(\D^d f) \le \bias(\polar{g}) .\]
\end{proposition}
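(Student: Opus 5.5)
We plan to compute $\D^d f$ explicitly using the homogeneous decomposition $f=g+h+f_{<d}$. Since $\deg(f_{<d})<d$, we have $\D^d f_{<d}=0$, so $\D^d f = \D^d g + \D^d h$. Because $h\in\Forms_d$, $\D^d_{\vec v} h(x)=\polar{h}(\vec v)$ does not depend on $x$. By Proposition~\ref{prop:dd}, $\D^d_{\vec v} g(x)=\polar{g}(\vec v, x')$ with $x'=x+\frac12\sum_t v^{(t)}$. For fixed $\vec v$, the map $x\mapsto x'$ is a bijection of $\F^n$. We therefore reparametrize and obtain
\[\bi(\D^d f) = \Exp_{\vec v,x'}\chi\big(\polar{g}(\vec v,x') + \polar{h}(\vec v)\big).\]
Now $A(x',\vec v):=\polar{g}(\vec v,x')$ is linear in $x'$, and $B(\vec v):=\polar{h}(\vec v)$. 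So Lemma~\ref{lemma:bias-chain} applies, with $E=\{\vec v \vvert \Nabla\polar{g}(\vec v)=0\}$:
\[\bi(\D^d f)=\bi(\polar{g})\,\bi(\polar{h}\vvert E).\]
The second inequality follows at once, since $|\bi(\polar{h}\vvert E)|\le 1$.

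For the first inequality, we express the restriction to $E$ through Lemma~\ref{lemma:avg-cor}, applied to the coordinates of $\Nabla\polar{g}(\vec v)$, i.e.\ $A_i(\vec v)=\polar{g}(\vec v,e_i)$. This gives
\[\Exp_{c\in\F^n}\bi\Big(\polar{h}-\sum_i c_i A_i\Big)=\P(E)\,\bi(\polar{h}\vvert E).\]
Here $\sum_i c_iA_i=\partial_c\polar{g}$, and $\P(E)=\bi(\polar{g})$ (this identity appears inside the proof of Lemma~\ref{lemma:bias-chain}). Hence
\[\bi(\D^d f)=\Exp_c \bi(\polar{h}-\partial_c\polar{g}).\]
Taking absolute values and applying the triangle inequality yields $\bias(\D^d f)\le \Exp_c\bias(\polar{h}-\partial_c\polar{g})$.

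The only step needing care is the change of variables $x\mapsto x'$. It relies on $\ch(\F)>2$, so that $\frac12$ exists, which is exactly the hypothesis of Proposition~\ref{prop:dd}. We must also check that we apply Lemma~\ref{lemma:bias-chain} with $x'$ as the linear variable and $\vec v$ as the other variable, in the right roles. We should also confirm that $\D^d h$ is the constant-in-$x$ form $\polar{h}$ when $\deg h=d$; this is just the definition of polarization. If $h$ or $g$ vanishes, the identities still hold trivially.
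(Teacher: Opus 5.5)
Your proposal is correct and follows the paper's proof. It uses the same change of variables $x\mapsto x'$ from Proposition~\ref{prop:dd} to get $\bi(\D^d f)=\bi(\polar{g}+\polar{h})$, and the same appeal to Lemma~\ref{lemma:bias-chain} for the second inequality. For the first inequality you take a small detour through the factorization $\bi(\polar{g})\,\bi(\polar{h}\vvert E)$ and Lemma~\ref{lemma:avg-cor}. The paper instead gets the identical identity $\bi(\D^d f)=\Exp_c\bi(\polar{h}-\partial_c\polar{g})$ in one line, by writing $\polar{g}(\vec{v},x')=\partial_{x'}\polar{g}(\vec{v})$ and renaming $x'=-c$; the two are equivalent, since Lemma~\ref{lemma:avg-cor} is itself just this orthogonality-of-characters computation.
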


\begin{proof}
    By Proposition~\ref{prop:dd}, 
    $\D^d_{\vec{v}} g(x) = \polar{g}(\vec{v}, x')$
    where $x' = x+\frac12\sum_{t=1}^d v^{(t)}$.
    Thus, 
    $\D^d_{\vec{v}} f(x) = \polar{g}(\vec{v},x') + \polar{h}(\vec{v})$,
    and therefore
    \[\bi(\D^d f) = \bi(\D^d_{\vec{v}} f(x)) 
    = \Exp_x \Exp_\vec{v} \chi(\polar{g}(\vec{v},x') + \polar{h}(\vec{v}))
    = \Exp_{x'} \Exp_\vec{v} \chi(\polar{g}(\vec{v}, x') + \polar{h}(\vec{v}))
    = \bi(\polar{g} + \polar{h}).\]
    This has two implications.
    First, it follows from Lemma~\ref{lemma:bias-chain}
    that
    \[\bias(\polar{g} + \polar{h}) \le \bias(\polar{g}).\]
    Second, we have
    \begin{align*}
        \bi(\polar{g} + \polar{h}) 
        &= \Exp_x \Exp_{\vec{v}} \chi(\polar{g}(\vec{v},x) + \polar{h}(\vec{v}))
        = \Exp_x \Exp_{\vec{v}} \chi(\partial_x \polar{g}(\vec{v}) + \polar{h}(\vec{v}))\\
        &= \Exp_c \Exp_{\vec{v}} \chi(\partial_{-c}\, \polar{g}(\vec{v}) + \polar{h}(\vec{v}))
        = \Exp_c \bi(\polar{h}-\partial_{c} \polar{g}).
    \end{align*}
    Taking absolute value and using the triangle inequality completes the proof.
\end{proof}

\subsection{Main proof}

We now put everything together---in particular, Propositions~\ref{prop:derive-bias}, \ref{prop:rk*-inh}, \ref{prop:corr} and Theorem~\ref{thm:SvR}---to prove Theorem \ref{thm:d+1-PGI-poly}.

\begin{manualthm}[\ref{thm:d+1-PGI-poly}]
    For every $f \in \Poly_{d+1}(\F_q)$, with $\ch(\F)>d+1$,
    \[\cor_{<d}(f) \ge \bias(\D^d f)^{\Ot_d(1)}.\]
\end{manualthm}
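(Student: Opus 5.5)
Let $\d=\bias(\D^d f)>0$, $g=f_{d+1}$, $h=f_d$, and $\F=\F_q$. We may assume $d\ge 2$, since for $d=1$ the statement is trivial. The plan is to produce a short $\rk^*$-decomposition of $f$, up to a polynomial of degree at most $d-1$, and then to invoke Proposition~\ref{prop:corr} with $k=d+1\ge 3$. That proposition yields correlation with a polynomial of degree $\le k-2=d-1$, which is exactly the $\cor_{<d}$ we need.

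\textbf{Step 1 (top component).} By Proposition~\ref{prop:derive-bias}, $\bias(\polar{g})\ge\d$. Theorem~\ref{thm:SvR} then gives $\PR(\polar{g})=\Ot_d(\log_q(1/\d))$. We plug in the diagonal and divide by $(d+1)!$, using Fact~\ref{fact:diagonal} and $\ch(\F)>d+1$. This gives $\rk(g)\le r_1:=\Ot_d(\log_q(1/\d))$.

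\textbf{Step 2 (middle component).} Proposition~\ref{prop:derive-bias} gives $\Exp_c\bias(\polar{h}-\partial_c\polar{g})\ge\d$, so we can fix a direction $c$ with $\bias(\polar{h}-\partial_c\polar{g})\ge\d$. The form $\polar{h}-\partial_c\polar{g}$ is $d$-linear, so Theorem~\ref{thm:SvR} bounds its partition rank by $\Ot_d(\log_q(1/\d))$. By Lemma~\ref{lemma:Delta-Nabla} it equals $\polar{h-\partial_c g}$, and $h-\partial_c g$ is a form of degree $d$. Restricting to the diagonal and dividing by $d!$ therefore gives $\rk(h-\partial_c g)\le r_2:=\Ot_d(\log_q(1/\d))$. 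Since $h-\partial_c g$ is homogeneous, $\rk^*(h-\partial_c g)=\rk(h-\partial_c g)$. If it is constant (zero), its $\rk^*$ is $0$ by convention.

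\textbf{Step 3 (assembling).} Apply Proposition~\ref{prop:rk*-inh} with $k=d+1$ and this $c$. This yields $\g\in\Poly_{d-1}$ with $\rk^*(g+h+\g)\le r_1+r_2$. Write $f=(g+h+\g)+(f_{<d}-\g)$; the second summand lies in $\Poly_{d-1}$. Proposition~\ref{prop:corr} applied to $F=g+h+\g$ gives $P$ with $\deg P\le d-1$ and $\bias(F-P)\ge q^{-2(r_1+r_2)}$. Then $P+f_{<d}-\g$ is a polynomial of degree $<d$ with the same correlation with $f$. Hence $\cor_{<d}(f)\ge q^{-\Ot_d(\log_q(1/\d))}=\d^{\Ot_d(1)}$.

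Degenerate cases need a brief check. If $g=0$, Step 1 is vacuous and Step 3 reduces to $\GI_d(d)$, which the same argument handles. If $\deg F\le 2$, the claim is trivial or covered by the convention that $\rk^*$ of a constant is $0$.

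\textbf{Main obstacle.} The delicate point is Step 2. The degree-$d$ component $h$ is not itself structured, so the argument only works because of the specific shift by $\partial_c g$. That shift is precisely what Proposition~\ref{prop:rk*-inh} can absorb, and it must be matched through Lemma~\ref{lemma:Delta-Nabla}. One must also check the sign convention ($\partial_{-c}$ versus $\partial_c$). Finally, the $\rk^*$ subadditivity in the chain rule requires the two parts to have different positive degrees, which holds here since $\deg F=d+1$ versus $d$.
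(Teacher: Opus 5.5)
Your proposal is correct and follows essentially the same route as the paper. You use Proposition~\ref{prop:derive-bias} to get biased $\polar{g}$ and $\polar{h}-\partial_c\polar{g}$, then Theorem~\ref{thm:SvR} with restriction to the diagonal (via Lemma~\ref{lemma:Delta-Nabla} and Fact~\ref{fact:diagonal}) to bound $\rk(g)$ and $\rk(h-\partial_c g)$, and finish with the chain rule Proposition~\ref{prop:rk*-inh} and Proposition~\ref{prop:corr} at $k=d+1$, absorbing $f_{<d}-\g$ into the correlating polynomial; your remark on the degenerate case $g=0$ is a small check the paper leaves implicit.
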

\begin{proof}
    For $d=1$ there is nothing to prove, since $\cor_{<1}(f)^2 = \bias(\D f)$;
    indeed, $\bias(f)^2 = \bias(\D f) \ge \cor_{<1}(f)^2 \ge \cor(f,0)^2 = \bias(f)^2$ using~(\ref{eq:GI-easy}).
    We thus assume $d \ge 2$.\footnote{Recall that polynomial bounds for the Gowers inverse theorem are known  also for $d=2$ (via Fourier analysis; e.g., Proposition~2.2 in~\cite{GT08}) and $d=3$~\cite{GGMT24,GGMT25}. However, our proof does not rely on any of these.}
    
    Let $g=f_{d+1}$ and $h=f_d$
    be the components of $f$ of degrees $d+1$ and $d$, respectively.
    We first claim that, for any direction vector $c$,
    \begin{equation}\label{eq:rk-bounds}
        \rk(h - \partial_c g) \le \rk(\polar{h} - \partial_c\polar{g}) \quad\text{ and }\quad 
        \rk(g) \le \rk(\polar{g}).
    \end{equation}
    To see this, abbreviate $\x^t=(x,\ldots,x)$ ($t$ times), and observe that
    $\partial_c \polar{g}(\x^d) = \polar{\partial_c g}(\x^d) = d!\partial_c g(x)$,
    using Lemma~\ref{lemma:Delta-Nabla} and Fact~\ref{fact:diagonal}.
    Moreover by Fact~\ref{fact:diagonal},
    we have 
    $\polar{h}(\x^d) = d!h(x)$
    and $\polar{g}(\x^{d+1}) = (d+1)!g(x)$.
    As $\ch(\F) > d+1$,
    we thus have $g(x) = \polar{g}(\x^{d+1})/(d+1)!$ and 
    $(h - \partial_c g)(x) = (\polar{h} - \partial_c\polar{g})(\x^d)/d!$, 
    proving~(\ref{eq:rk-bounds}) (since plugging in the diagonal in a reducible multilinear form gives a reducible form).
    
    By Proposition~\ref{prop:derive-bias},
    $\bias(\D^d f) \le \bias(\polar{g})$,
    and 
    $\bias(\D^d f) \le \bias(\polar{h} - \partial_c\polar{g})$
    for some direction vector $c$ which we henceforth fix.
    Put $\bias(\D^d f) = q^{-r}$ (if $\bias(\D^d f)=0$ there is nothing to prove),
    so that
    \begin{equation}\label{eq:bias-bounds}
        \bias(\polar{g}) \ge q^{-r} \quad\text{ and }\quad 
        \bias(\polar{h} - \partial_c\polar{g}) \ge q^{-r}.
    \end{equation}
    Now, combine~(\ref{eq:rk-bounds}), (\ref{eq:bias-bounds}), and 
    Theorem~\ref{thm:SvR} (using $\rk(T) \le \PR(T)$)
    to deduce that
    \[
        \rk(g) \le \rk(\polar{g}) \le \Ot_d(r) \quad\text{ and }\quad 
        \rk(h - \partial_c g) \le \rk(\polar{h} - \partial_c\polar{g}) \le \Ot_d(r).
    \]
    Apply Proposition~\ref{prop:rk*-inh} 
    to obtain
    \[\rk^*(g+h+\g) \le \rk(g) + \rk(h-\partial_c g) \le \Ot_d(r)\]
    for some $\g \in \Poly_{d-1}$.
    Next, apply Proposition~\ref{prop:corr} with $k=d+1$ ($\ge 3$ by assumption) on $g+h+\g \in \Poly_{d+1}$.
    We deduce that there exists a polynomial $P$ with 
    $\deg(P) \le k-2 = d-1$ such that
    \[\cor(g+h+\g,\,P) \ge q^{-2\rk^*(g+h+\g)} \ge q^{-\Ot_d(r)}.\]
    To finish, write $f=g+h+f_{<d}$.
    Note that $P+f_{<d}-\g \in \Poly_{d-1}$; 
    we claim that it
    significantly correlates with $f$. Indeed,
    \begin{align*}
        \cor_{<d}(f)
        &\ge \cor(f,\,P+f_{<d}-\g) = \bias(f-(P+f_{<d}-\g)) 
        = \bias(g+h+\g-P)\\ 
        & = \cor(g+h+\g,\,P) \ge q^{-\Ot_d(r)} 
        = \bias(\D^d f)^{\Ot_d(1)}.
    \end{align*}
    This completes the proof.
\end{proof}

\paragraph*{Acknowledgment.} We thank Daniel G.~Zhu for insightful comments on an earlier draft.

\end{document}